\newcommand{\Ext}{\operatorname{Ext}}
\newcommand{\End}{\operatorname{End}}
\newcommand{\soc}{\operatorname{soc}}
\newcommand{\rad}{\operatorname{rad}}
\newcommand{\HH}{\operatorname{HH}}
\newcommand{\Hom}{\operatorname{Hom}}
\newcommand{\im}{\operatorname{im}}
\newcommand{\ZZ}{\mathbb{Z}}
\newcommand{\CC}{\mathbb{C}}
\newcommand{\ssl}{\mathfrak{sl}}
\newcommand{\ad}{\operatorname{ad}}
\newcommand{\vl}{\varpi_1}
\newcommand{\vt}{\varpi_2}
\theoremstyle{plain}
\newtheorem{thm}{Theorem}[section]
\newtheorem{lem}[thm]{Lemma}
\newtheorem{prop}[thm]{Proposition}
\newtheorem{cor}[thm]{Corollary}
\theoremstyle{definition}
\newtheorem{defn}[thm]{Definition}
\author{Matthew Towers}
\title{Singular blocks of restricted $\ssl_3$}
\email{mjt43@le.ac.uk}
\date{\today}
\begin{document}

\begin{abstract}
  We compute generators and relations for the basic algebra of a
  non-semisimple singular block of the restricted enveloping algebra of
  $\ssl_3$ over an algebraically closed field of characteristic $p>3$. Working directly with
  the basic algebra we compute its centre and the internal degree zero
  part of its first Hochschild cohomology, and show its Verma modules are Koszul.
\end{abstract}

\maketitle
Let $A$ be a finite-dimensional algebra over an algebraically closed
field $k$.  The basic algebra of $A$ is a $k$-algebra all of whose
simple modules are one-dimensional which is Morita equivalent to $A$; it
can be thought of as the smallest algebra with the same representation
theory as $A$.
The small size of the basic algebra tends to make calculations
considerably easier.

This paper deals with restricted enveloping algebras of classical Lie
algebras.   While the basic algebras for non-semisimple blocks of
restricted $\ssl_2$ have long been known (and are reviewed in Section
\ref{hoch_section}), to the best of my knowledge no other nontrivial
examples have been computed, although some endomorphism algebras of projectives
are constructed in \cite[Chapter 19]{AJS}. Here we find generators and relations for the
basic algebras of non-semisimple singular blocks
of the restricted enveloping algebra $u$ of $\ssl_3(k)$ over an
algebraically closed field of
characteristic $p>3$.
The simple modules of
$u$ are labelled by their highest weights; singular here means that the
block has a simple module whose highest weight $\lambda$ satisfies
$\langle \lambda + \rho, \alpha\rangle=0 \mod p$ for some root $\alpha$,
where $\rho$ is the half-sum of the positive roots for $\ssl_3(k)$.

The method is as follows.  Thanks to work of Simon Riche \cite{Riche} we
know that each block of $u$ admits a $\mathbb{Z}_{\geq 0}$-grading
with respect to which it is a Koszul algebra.  We show in Section
\ref{verma_section} that the Verma
modules in our blocks are graded modules with respect to this grading.
This allows us to
use a version of the Brauer-type reciprocity of \cite{Holmes} to
find the graded structure of the projectives in our block, and hence the
Hilbert series for the basic algebra and of its Koszul dual.

In Section \ref{sec:action} we determine the action of $G=SL_3(k)$ on
the Ext-groups between simples in our blocks.  We then exploit this
to calculate generators and relations for the basic algebra in Theorem
\ref{presentation}.  In Section
\ref{hoch_section} we use
this presentation of the basic algebra to calculate its centre and the
internal degree zero part
of its first
Hochschild cohomology.  Finally in Section \ref{kos} we identify the
modules for the basic algebra that correspond to the Verma modules in
our block, and prove that they are Koszul.

\section{Notation and definitions} \label{notation_section}
\subsection{Algebras}\label{algebra_defns} Let $\mathbb{F}$ be a field.  The Lie algebra $\ssl_3(\mathbb{F})$ is generated by elements $E_i, F_i, H_i$
for $i=1,2$ subject to the relations

\begin{center}
  \begin{tabular}{lll}
     $[H_1,H_2]=0$ &  $[H_i,E_i]=2E_i$ & 
    $[H_i,F_i]=-2F_i$ \\
   $[H_i,E_j]=-E_j$ &  $[H_i,F_j]=F_j$ & 
  $[E_i,F_j]=0$ \\
  
  $[E_i,[E_1,E_2]]=0$ & $[F_i,[F_2,F_1]]=0$ & $[E_i,F_i]=H_i$
\end{tabular}
\end{center}
for $i=1,2$ and $j \neq i$.   We write $E_{12}$ for $[E_1,E_2]$ and
$F_{12}$ for $[F_2,F_1]$.

Let $U_{\ZZ}$ be the Kostant $\ZZ$-form of the universal enveloping
algebra of $\ssl_3(\CC)$: this is the subring generated by all
elements of the form $E_i^{(n)}:= E_i^n/n!$ and $F_i^{(n)}:= F_i^n/n!$.
Let $k$ be an algebraically closed field of characteristic $p>3$ and
$D = U_\ZZ \otimes_\ZZ k$ be the hyperalgebra or algebra of
distributions \cite[I.7, II.1.12]{Jantzen_book}.  We will abuse notation by
writing $E_i^{(n)}$ for the element $E_i^{(n)} \otimes 1$ of $D$ and so
on.  The restricted enveloping algebra $u$ of $\ssl_3(k)$ is defined to
be the subalgebra of $D$ generated by the $E_i$ and $F_i$ for $i=1,2$,
it is a
Hopf algebra of dimension $p^8$
isomorphic to the quotient of the universal enveloping algebra
$U(\ssl_3(k))$ by the two-sided ideal generated by $E_i^p, F_i^p,
E_{12}^p, F_{12}^p, H_i^{p}-H_i$ for $i=1,2$.  We write $b^+$ for the
subalgebra of $u$ generated by the $E_i$ and $H_i$ for $i=1,2$, and
$b^-$ for the subalgebra generated by the $F_i$ and $H_i$ for $i=1,2$.

\subsection{Gradings} Let $\Phi= \{ \pm \alpha_1,\pm\alpha_2, \pm(\alpha_1+\alpha_2)\}$ be the $\ssl_3$ root system with simple roots $\alpha_1$ and
$\alpha_2$, and let $\varpi_1$ and $\varpi_2$ be the fundamental
weights. Let $X$ be the weight lattice, the free abelian group on the
$\varpi_i$, which we identify with $\ZZ^2$ by $\varpi_1 = (1,0)$ and
$\varpi_2=(0,1)$.  Then $U_\ZZ$,
$D$, and $u$ are $X$-graded with $E_i$ in degree $\alpha_i$ and
$F_i$ in degree $-\alpha_i$.  We refer to this as the weight
grading and
write $|x| = \alpha$ if $x$ is a homogeneous element of degree $\alpha
\in X$.  

By \cite{Riche}, $u$ admits a $\ZZ_{\geq 0}$-grading with respect to
which it is a Koszul algebra in the sense of \cite{BGS}. We write $u_i$ for the $i$th
graded piece, and refer to this as the $K$-grading on $u$.  Koszulity implies that $u$ is generated as an algebra by
$u_0$ and $u_1$, and that $u_{>0} = \bigoplus_{i>0}u_i$ is the
Jacobson radical $J$ of $u$.

\subsection{Modules} 
If $M=\bigoplus _{i \in \ZZ} M_i$ is a $K$-graded module, we write
$M[i]$ for the $K$-graded module with the same underlying vector space
and action as $M$, but with grading determined by $M[i]_j = M_{i-j}$.
We write $\Hom_u^K$ and $\Ext_u^{K}$ for the Hom and Ext functors in the
category of finite-dimensional $K$-graded modules.
If $M$ and $N$ are finite-dimensional $K$-graded $u$-modules
\begin{equation}\label{Kdecomp}
	\Hom_u(M,N) = \bigoplus_{i\in \ZZ} \Hom_u^{K}(M,N[i])
\end{equation}
with a similar result for Ext.
We say that a $u$-module $M$ is \textbf{$K$-gradable} if it admits a
decomposition $M = \bigoplus_{i \in \ZZ} M_i$ as vector spaces such that
$u_i M_j \subseteq M_{i+j}$ for all $i$ and $j$.  The simple $u$-modules
are $K$-gradable since $u_{>0}$ acts as zero on any simple, and the
projective $u$ modules are $K$-gradable \cite{Gordon}.

Weight-graded $u$-modules are the same thing as $G_1T$-modules in the
sense of \cite[II.9]{Jantzen_book}. We thus write $\Hom_{G_1T}$ and
$\Ext_{G_1T}$ for the Hom and Ext functors in the category of
weight-graded $u$-modules. 

For each $\alpha \in X$ there is a corresponding one-dimensional
$B_1T$-module $k_\alpha$.  The Verma module $Z(\alpha)$ is defined to be
$G_1T$ module induced from $k_\alpha$, and $L(\alpha)$ denotes the
simple top of $Z(\alpha)$.  The modules $L(a,b)$ with $0\leq a,b<p$
restricted to $u$ form
a complete set of representatives for the simple $u$-modules.  The dual
Verma module $Z'(\alpha)$ is the $G_1T$ module coinduced from
$k_\alpha$; it has simple socle $L(\alpha)$.  

\section{Verma modules} \label{verma_section}
\subsection{Structure of Verma modules} 

For each $0 \leq a \leq p-2$ there is a non-semisimple singular block of
$u$ containing the simple modules $L(p-1,a), L(a,p-2-a)$ and $L(p-2-a,
p-1)$.  From now on we fix such an $a$ and write \[\mu=(p-1,a),\,\,\,
\lambda=(a,p-2-a),\,\,\, \gamma=(p-2-a,p-1).\]  The submodule lattices
for the Verma modules corresponding to these weights were determined in
\cite[Theorems 2.4, 2.5, 2.6]{Xi} and \cite[Theorems 3.3, 3.4]{Irving}.
The following diagrams display these submodule lattices; beneath each
composition factor $L$ is an element of the Verma module which is a
highest weight vector for $L$ modulo the sum of the submodules below it
in the lattice.
\[
  \xymatrixrowsep{0.2cm} \xymatrixcolsep{0.2cm}
  \xymatrix{
    & \txt{$L(\gamma)$\\$v_\gamma$}   \ar@{-}[dr] \ar@{-}[dl]& \\
     \txt{$L(\lambda-p\vl+p\vt)$\\$F_1^{(p-1-a)}v_\gamma$} \ar@{-}[dr] & & \ar@{-}[dl]
    \txt{$L(\lambda-p\vt)$\\$\ad(F_2^{(p)})(F_1^{(p-1-a)})v_\gamma$}\\
    & \txt{$L(\mu-p\vl)$ \\ $F_2^{(p-1-a)}F_1^{(p-1-a)}v_\gamma$} & } \]
\[
  \xymatrixrowsep{0.2cm} \xymatrixcolsep{0.2cm}
  \xymatrix{
    & \txt{$L(\mu)$\\$v_\mu$} \ar@{-}[dr] \ar@{-}[dl]& \\
    \txt{$L(\lambda-p\vl)$\\$\ad(F_1^{(p)})(F_2^{(a+1)})v_\mu$} \ar@{-}[dr]
    & & \ar@{-}[dl] \txt{$L(\lambda-p\vl-p\vt)$\\$F_2^{(a+1)}v_\mu$} \\
    & \txt{$L(\gamma-p\vt)$\\$F_1^{(a+1)}F_2^{(a+1)} v_\mu$} & } \]
\[
  \xymatrixrowsep{0.2cm} \xymatrixcolsep{0.2cm}
  \xymatrix{
    & \txt{$L(\lambda)$\\$v_\lambda$} \ar@{-}[dr] \ar@{-}[dl]& \\
    \txt{$L(\gamma-p\vl)$\\$F_1^{(a+1)}v_\lambda$} \ar@{-}[dr] & & \ar@{-}[dl]
    \txt{$L(\mu-p\vt)$\\$F_2^{(p-1-a)}v_\lambda$} \\
    &\txt{$
      L(\lambda-p\vl-p\vt)$\\$F_2^{(a+1)}\ad(F_1^{(p)})(F_2^{(p-1-a)})v_\lambda$} & } \]

The structure of the dual Verma modules
follows from these by duality using \cite[Lemma II.9.2]{Jantzen_book}.

\subsection{Verma modules are $K$-gradable}

 \begin{lem} \label{ext1}
  $\Ext^1_u(L(\gamma),L(\mu))=\Ext^1_u(L(\mu),L(\gamma))=\Ext^{1}_u(L(\lambda),L(\lambda))=0$.
\end{lem}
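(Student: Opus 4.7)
The plan is to translate each Ext vanishing into the non-existence of a specific quotient of the appropriate Verma module, and then to read off the answer from the submodule lattices displayed above.

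By the Ext-analogue of \eqref{decomp}, it suffices to show that $\Ext^{1,\textrm{wt}}_u(L(\alpha), L(\beta)[\sigma])$ vanishes for every $\sigma \in X$ and each pair $(\alpha,\beta) \in \{(\gamma,\mu), (\mu,\gamma), (\lambda,\lambda)\}$. A non-zero class corresponds to a non-split weight-graded extension $0 \to L(\beta)[\sigma] \to E \to L(\alpha) \to 0$. I claim that $M(\alpha)$ surjects onto $E$: lifting the generator of the top $L(\alpha)$ to $v \in E_\alpha$, each $E_iv$ lies in $L(\beta)[\sigma]_{\alpha+\alpha_i}$, and a short dominance-order computation shows in each configuration we have to consider that $\alpha + \alpha_i$ is not a weight of $L(\beta)[\sigma]$. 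Hence $v$ is itself a highest weight vector, $v$ generates $E$ by Nakayama's lemma, and the universal property of $M(\alpha)$ yields the required surjection.

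Consequently $L(\beta)[\sigma]$ must occur as a weight-graded composition factor of $M(\alpha)$. Inspecting the three diagrams pins $\sigma$ down to $(-p,0)$ for the first Ext, $(0,-p)$ for the second, and $\sigma \in \{(0,0),(-p,-p)\}$ for the third. For each of these values, $E$ would be a quotient $M(\alpha)/K$, and I must show no such $K$ exists. In the first two cases and in the subcase $\sigma=(-p,-p)$, $K$ would need composition factors equal to the two middle entries of the diamond, which omit $\soc M(\alpha)$; since each Verma module in our block has simple socle, any nonzero submodule contains $\soc M(\alpha)$ as a composition factor, contradicting the required composition factors of $K$. In the remaining subcase $\sigma=(0,0)$, the module $E$ would contain two unshifted copies of $L(\lambda)$ whereas $M(\lambda)$ contains only one, so no such quotient exists.

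The main technical point is the reduction to Verma-module quotients, which rests on the dominance-order checks showing $\alpha + \alpha_i \not\in \supp L(\beta)[\sigma]$ in each relevant configuration. These are straightforward but need to be carried out one configuration at a time.
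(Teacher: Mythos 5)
Your proposal shares the paper's overall strategy — reduce via \eqref{decomp} to weight-graded extensions, argue the total space is a quotient of a Verma module, and then read off a contradiction from the displayed submodule lattices — but the key step where you establish the surjection $M(\alpha) \twoheadrightarrow E$ has a real gap. You lift the highest weight vector of $L(\alpha)$ to $v \in E_\alpha$ and assert that ``a short dominance-order computation shows in each configuration we have to consider that $\alpha+\alpha_i$ is not a weight of $L(\beta)[\sigma]$.'' This is not true for arbitrary $\sigma$: for instance, $\sigma = \alpha + \alpha_1 - \beta$ makes $\alpha + \alpha_1$ exactly the highest weight of $L(\beta)[\sigma]$, so $E_1 v$ has no reason to vanish, and nothing in your argument explains why those $\sigma$ can be excluded before the surjection is established. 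In other words, you need the surjection to pin down $\sigma$, but you need $\sigma$ pinned down before the dominance check can possibly go through, and the circle is not broken.

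The paper resolves precisely this issue with a Weyl group twist, which your proposal omits. Rather than insisting on the particular vertex $\alpha$ of $\supp L(\alpha)$, the paper observes that for a non-split extension there is \emph{some} vertex of $\supp L(\alpha)$ lying outside or on the boundary of $\sigma + \supp L(\beta)$, and then uses $^{w}L(\alpha) \cong L(\alpha)$ for $w \in W$ to twist the extension so that this vertex becomes the position of a highest weight vector generating the twisted module. Only after that twist does the universal property of the Verma module apply. Without this step you have no control over $E_i v$ for those $\sigma$ where the highest weight of $L(\alpha)$ sits inside $\sigma + \supp L(\beta)$. The second half of your argument (identifying the admissible $\sigma$ from the lattices and ruling out the required quotients of $M(\alpha)$ via the simple socle) is fine and essentially matches the paper's ``impossible by the structure of $M(\gamma)$,'' but it only becomes applicable once the surjection is available for every $\sigma$.
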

\begin{proof}
	These Ext groups are $G=SL_3(k)$-modules with $G_1$ acting
	trivially.  Suppose for contradiction that
	$\Ext^1_u(L(\gamma),L(\mu))\neq 0$. By the inequality of
	\cite[Lemma 5.1]{Andersen}, as a $G$-module its composition
	factors are isomorphic to $L(0,p)$. So there is an
	indecomposable $G_1T$-module with top $L(\gamma)$ and socle
	$L(\mu-p\varpi_1)$.  Since $\gamma + \alpha_i$ is not in the
	support of $L(\mu-p\varpi_1)$ on the weight lattice for $i=1,2$,
	this module is highest weight, and thus a quotient of
	$Z(\gamma)$. This is impossible from the structure of
	$Z(\gamma)$ given above. Similarly the second Ext group
	vanishes,  and the last is zero by \cite[Proposition
	II.12.9]{Jantzen_book}.
\end{proof}

\begin{prop}
  Let $\alpha \in \{\lambda,\gamma,\mu\}$.  Then $Z(\alpha)$ and
  $Z'(\alpha)$ are
  $K$-gradable.
\end{prop}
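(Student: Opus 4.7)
My plan is to construct a $K$-grading on $M(\alpha)$ layer by layer, starting from the socle, and to treat $M^-(\alpha)$ by the same kind of argument applied to its reversed Loewy structure. The essential tools are Koszulity of $u$---which forces $\Ext^n_u(L,L')$ between simples to concentrate in a single internal $K$-degree---and the three head-to-socle $\Ext^1$ vanishings of Lemma~\ref{ext1}.

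From the Loewy diagrams, each $M(\alpha)$ has Loewy length three: simple head $L(\alpha)$, semisimple middle $\rho \oplus \sigma$, and simple socle $\tau$. First I would $K$-grade $M_2 := \rad M(\alpha)$ from the short exact sequence $0 \to \tau \to M_2 \to \rho \oplus \sigma \to 0$; since Koszulity collapses $\Ext^1_u(\rho \oplus \sigma, \tau)$ into a single graded summand of the decomposition~\eqref{Kdecomp}, I obtain a $K$-grading on $M_2$ with $\rho \oplus \sigma$ and $\tau$ in adjacent degrees. Next I would apply $\Hom_u(L(\alpha),-)$ to that sequence: the resulting long exact sequence begins with $\Ext^1_u(L(\alpha), \tau)$, and the pair $(L(\alpha), \tau)$ is in each case exactly one of $(L(\gamma), L(\mu))$, $(L(\mu), L(\gamma))$, or $(L(\lambda), L(\lambda))$, so this Ext vanishes by Lemma~\ref{ext1}. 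The resulting injection $\Ext^1_u(L(\alpha), M_2) \hookrightarrow \Ext^1_u(L(\alpha), \rho \oplus \sigma)$, together with Koszulity making the target internally concentrated in the degree matching the top of $M_2$, forces the extension class of $0 \to M_2 \to M(\alpha) \to L(\alpha) \to 0$ to come from a single graded class. This produces a $K$-grading on $M(\alpha)$ with $L(\alpha)$ at the top.

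The module $M^-(\alpha)$ has the reversed Loewy structure---head $\tau$, middle $\rho \oplus \sigma$, socle $L(\alpha)$---and the needed head-to-socle $\Ext^1$ vanishing is once more one of the three in Lemma~\ref{ext1}, so the same two-step argument applies. The step I expect to demand the most care is the final compatibility check in Step~2: one must verify, via the decomposition~\eqref{Kdecomp}, that the summand of $\Ext^1_u(L(\alpha), \rho \oplus \sigma)$ picked out by Koszulity really contains the image of $[M(\alpha)]$ under the connecting map---so that the lifted graded class honestly represents $M(\alpha)$, and not merely some other module with the same composition factors.
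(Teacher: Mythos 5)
Your argument is correct and takes a genuinely different route from the paper. You build the grading upward from the socle by showing that the two successive extension classes --- that of $\rad M(\alpha)$ in $\Ext^1_u(\rho\oplus\sigma,\tau)$, and then that of $M(\alpha)$ in $\Ext^1_u(L(\alpha),\rad M(\alpha))$ --- each land in a single summand of the decomposition~\eqref{Kdecomp}, using Koszul Ext-concentration between simples for the first step and the Lemma~\ref{ext1} vanishing plus Koszul concentration of $\Ext^1_u(L(\alpha),\rho\oplus\sigma)$ for the second. The paper instead constructs the grading explicitly from the top: it sets $M(\alpha)_0 = F(P(\alpha)_0)$ using a graded surjection $F$ from the $K$-graded projective cover, sets $M(\alpha)_2=\soc M(\alpha)$, chooses a $u_0$-complement $M(\alpha)_1$, and then verifies $u_1 M(\alpha)_0\subseteq M(\alpha)_1$ directly, invoking Lemma~\ref{ext1} only to observe that $JP(\alpha)/J^2P(\alpha)$ has no summand isomorphic to $\soc M(\alpha)$. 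Both arguments hinge on Lemma~\ref{ext1}; yours leans more heavily on the Koszul property of $u$, while the paper's uses only the graded structure of the projective cover and avoids computing Ext groups with non-simple second argument. One small remark on the step you flag: the compatibility check is in fact automatic. Once a $K$-grading on $\rad M(\alpha)$ is chosen, the induced map $\Ext^1_u(L(\alpha),\rad M(\alpha))\to\Ext^1_u(L(\alpha),\rho\oplus\sigma)$ decomposes over~\eqref{Kdecomp} as a direct sum of graded pieces; since the target is concentrated in a single degree, injectivity alone forces the source to be concentrated in that same degree. And the graded lift of the extension class has underlying ungraded extension class equal to $[M(\alpha)]$ by construction, since~\eqref{Kdecomp} is exactly the statement that the forgetful functor is compatible with the direct-sum decomposition, so the middle term of the graded extension is $M(\alpha)$ and not some other module with the same factors.
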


\begin{proof}
We give the proof for $Z(\alpha)$, the proof for $Z'(\alpha)$ being
analogous.  The $u$-projective cover $P(\alpha)$ of $L(\alpha)$ is $K$-graded, generated in degree
zero.  Choose a surjection $F:P(\alpha) \to Z(\alpha)$ and let
$Z(\alpha)_0 =
F(P(\alpha)_0)$, a
$u_0$-submodule of $Z(\alpha)$.  Let $Z(\alpha)_2 = \soc_u(
Z(\alpha))$.

The quotient map $\rad Z(\alpha) \to \rad Z(\alpha) / \soc Z(\alpha)$ is
split as a map of $u_0$-modules because $u_0$ is semisimple; let
$Z(\alpha)_1 \subset Z(\alpha)$ be the image of a splitting map.  As
$u_0$-modules, 
\[
	Z(\alpha)|_{u_0} = Z(\alpha)_0 \oplus Z(\alpha)_1
\oplus Z(\alpha)_2
\]
and $Z(\alpha)_1$ is isomorphic as a $u_0$-module to the restriction of
the two middle composition factors of $Z(\alpha)$ to $u_0$.  We claim
that this is a $K$-grading on $Z(\alpha)$.  Since we had a decomposition
of $u_0$-modules, the only thing we need check is that $u_1 Z(\alpha)_0
\subseteq Z(\alpha)_1$.

Let $l_0 \in Z(\alpha)_0$ and $x \in u_1$ and suppose $xl_0=l_1+l_2$ for $l_i \in
Z(\alpha)_i$.  We will show $l_2=0$.  Suppose not, let $\soc Z(\alpha)
\cong
L(\pi)$, and
write $u_0 = \bigoplus _\beta M_\beta$ where $M_\beta $ is a matrix
algebra over $k$ corresponding to the simple module $L(\beta)$.
Pick $q \in
M_\pi$ such that $q l_2=l_2$.  By Lemma~\ref{ext1},
$JP(\alpha)/J^2P(\alpha)$ has no $L(\pi)$ summands, so $M_\pi$ acts as
zero on $P(\alpha)_1=u_1P(\alpha)_0$ and on the $u_0$-module $Z(\alpha)_1$.
Let $\hat l_0 \in P(\alpha)_0$ be such that $F(\hat l_0) = l_0$.  Then
$F(qx \hat l_0) =l_2\neq 0$ so $q\cdot x \hat l_0 \neq 0$, contradicting
that $M_\pi$ acts as zero on $u_1 P(\alpha)_0$.  \end{proof}

Henceforth $Z(\alpha)$ will have the $K$-grading constructed above, so that
its top composition factor is in degree zero, and $Z'(\alpha)$ will be
given the $K$-grading so that its socle $L(\alpha)$ is in degree zero and
its head in degree $-2$.
Thus the multiplicity of $L(\alpha)[i]$
as a composition factor of $Z(\beta)$ equals the multiplicity of
$L(\alpha)[-i]$ as a composition factor of $Z'(\beta)$.

\section{Projective modules and Hilbert series} \label{proj_sec}
In this section we study the structure of the indecomposable
projective modules in our block.

\subsection{$K$-graded Verma filtrations}
We will determine the composition factors of these projectives in the
$K$-graded module category using a Brauer-type reciprocity result.  The
results of  \cite{Holmes} cannot be used directly since $b$ is not a
$K$-graded subalgebra of $u$, however the proofs of the reciprocity
results follow the arguments there closely.


\begin{defn} A $K$-graded module $M$ is said to have a $K$-graded Verma filtration if there
is a sequence $0 = M_0 \subset M_1 \subset \cdots \subset M_s =
M$ of $K$-graded submodules with each quotient $M_r/M_{r-1}$
isomorphic as a $K$-graded module to $Z(\alpha_r)[n_r]$ for some $\alpha_r \in X$ and some $n_r
\in \ZZ$.\end{defn}
Write
$[M : Z(\alpha_r)[n_r]]$ for the number of $M_r/M_{r-1}$ which are
isomorphic as $K$-graded modules to $Z(\alpha_r)[n_r]$
(it is not clear yet that this is
independent of the filtration chosen), and $[M:L(\alpha)[n]]$ for the
multiplicity of $L(\alpha)[n]$ as a composition factor of $M$ in the
category of $K$-graded modules.

\begin{lem}
  Let $\alpha \in \{\lambda, \mu, \gamma\}$.  Then  $P(\alpha)$ has a
  $K$-graded Verma filtration.\end{lem}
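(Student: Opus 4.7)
The plan is to adapt Holmes's ungraded proof of the existence of Verma filtrations on projectives to the $K$-graded setting. The key input is that $P(\alpha)$ is a projective object of the category of $K$-graded $u$-modules; this follows from the $K$-gradability of $P(\alpha)$ (cited above from \cite{Gordon}) together with the fact that a $K$-gradable projective cover of $L(\alpha)$ must also serve as a projective cover in the $K$-graded category.

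I would first construct the top piece of the filtration, a $K$-graded surjection $\pi\colon P(\alpha)\twoheadrightarrow M(\alpha)$. With the normalizations fixed after the previous proposition, both modules have $L(\alpha)$ as simple top sitting in $K$-degree zero. Lifting the canonical surjection $M(\alpha)\twoheadrightarrow L(\alpha)$ along $P(\alpha)\twoheadrightarrow L(\alpha)$ using $K$-projectivity yields a $K$-graded map $\pi$, which is surjective by Nakayama. Let $N=\ker\pi$ denote its $K$-graded kernel.

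The remaining task is to show that $N$ itself has a $K$-graded Verma filtration, which I would do inductively on composition length. The main tool is a $K$-graded Ext-vanishing criterion in the spirit of Holmes: a $K$-graded module admits a $K$-graded Verma filtration iff $\Ext^{1,K}_u(-,M^-(\beta)[n])=0$ for every $\beta\in\{\lambda,\mu,\gamma\}$ and every $n\in\ZZ$. The module $P(\alpha)$ satisfies this vanishing by $K$-projectivity, and combined with $\Ext^{2,K}_u(M(\alpha),M^-(\beta)[n])=0$ the long exact sequence arising from $0\to N\to P(\alpha)\to M(\alpha)\to 0$ transfers the vanishing to $N$. The inductive step then peels off a top Verma $M(\beta)[m]$ from $N$ by exactly the same lifting argument as for $\pi$, with the composition factor $L(\beta)[m]$ chosen in the head of $N$; the kernel of the new surjection again inherits the Ext-vanishing.

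The hardest step, I expect, is establishing the $K$-graded Ext-vanishings and the filtration criterion itself. As the paper already notes, $b^+$ is not a $K$-graded subalgebra of $u$, so Holmes's adjunction $\Hom_u(u\otimes_{b^+}-,-)\cong\Hom_{b^+}(-,\res -)$ does not pass directly to the $K$-graded category. I would work around this by exploiting the explicit $K$-gradings on $M(\beta)$ and $M^-(\beta)$ constructed in the previous section together with the submodule diagrams reproduced in the excerpt, reducing the required vanishings to a finite combinatorial check across the three weights $\lambda,\mu,\gamma$ of the block; with only three Vermas (and their duals) to consider, this should be tractable by hand.
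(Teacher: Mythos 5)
Your approach is genuinely different from the paper's, and it has a gap that the paper's argument is carefully designed to avoid. You build the filtration top-down, first producing a $K$-graded surjection $P(\alpha)\twoheadrightarrow M(\alpha)$ and then trying to peel Verma quotients off the kernel $N$ using a $K$-graded Ext-vanishing criterion. The sticking point, which you correctly flag, is establishing that criterion in the $K$-graded category: since $b^+$ is not a $K$-graded subalgebra, the Frobenius reciprocity that underlies Holmes's $\Ext$ computations does not transport directly, and you leave this as a ``finite combinatorial check'' to be done by hand. Moreover, your phrase ``exactly the same lifting argument as for $\pi$'' does not apply to $N$: the construction of $\pi$ used $K$-projectivity of $P(\alpha)$, which $N$ does not enjoy, so extracting a Verma \emph{quotient} of $N$ from a simple factor of its head requires the very Ext-vanishing machinery you have not yet established. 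As written, then, the central step of your induction is unjustified.

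The paper sidesteps all of this by working bottom-up with the hypothesis ``projective on restriction to $b^-$'' and treating Holmes's Theorem~4.4 as a black box that produces an \emph{ungraded} injection $\iota\colon M(\beta)\hookrightarrow M$. The key observation, which needs no graded Ext theory, is that a homomorphism $M(\beta)\to M$ is injective iff it is nonzero on the simple socle of $M(\beta)$. Decomposing $\iota=\sum_j\iota_j$ into $K$-graded components, at least one $\iota_j$ is nonzero on the socle and hence a $K$-graded injection. One then quotients by that copy of $M(\beta)$, notes that the hypothesis ``projective over $b^-$'' is preserved (since $M(\beta)$ is $b^-$-free), and inducts on dimension. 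This buys you a short proof that never has to rebuild the homological criterion in the $K$-graded setting. If you want to salvage your route, the cleanest fix is to replace your Ext-vanishing criterion by this socle-detection observation, or simply to grade-decompose Holmes's ungraded map as the paper does.
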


\begin{proof}
 More generally, suppose that $M$ is any $u$-module in our block which is
 projective on restriction to $b^-$.  By \cite[Theorem 4.4]{Holmes}
 there is an ungraded injection $\iota: Z(\beta) \to M$ for some $\beta
 \in \{\lambda, \mu, \gamma\}$.
 We may decompose $\iota$ as a sum of $K$-graded
 homomorphisms
 $\iota_j$.  A homomorphism $Z(\beta) \to M$ is injective if and
 only if it is non-zero
 on the simple socle of $Z(\beta)$, and not all the $\iota_j$ can be
 zero on the socle since $\iota$ isn't.  Thus one of the $\iota_j$ is a
 $K$-graded injective module homomorphism $Z(\beta) \to M$.  The
 result follows by induction on the dimension of $M$.
\end{proof}

\begin{prop}
  Let $\alpha, \beta\in \{\lambda, \gamma, \mu\}$ and $i \in \ZZ$.   Then
  \[ [P(\alpha) : Z(\beta)[i]] = [Z(\beta)[-i] : L(\alpha)]. \]
\end{prop}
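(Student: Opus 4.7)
The plan is to compute $\dim \Hom_u^K(P(\alpha), M^-(\beta)[i])$ in two different ways, adapting the BGG-style reciprocity argument of \cite{Holmes} to the $K$-graded setting.

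\emph{First computation.} Since $P(\alpha)$ is the $K$-graded projective cover of the simple module $L(\alpha)$ placed in degree zero, for any $K$-graded module $N$ I have $\Hom_u^K(P(\alpha), N) \cong e_\alpha N_0$, where $e_\alpha$ is the primitive idempotent of the basic algebra corresponding to $L(\alpha)$. Multiplication by $e_\alpha$ is exact and sends $L(\gamma)[j]$ to a one-dimensional space concentrated in degree $j$ when $\gamma = \alpha$ and to $0$ otherwise, so
\[ \dim \Hom_u^K(P(\alpha), M^-(\beta)[i]) = \dim e_\alpha M^-(\beta)_i = [M^-(\beta) : L(\alpha)[i]]. \]
The remark closing Section~\ref{verma_section} rewrites this as $[M(\beta) : L(\alpha)[-i]]$, and a routine shift of grading identifies that with $[M(\beta)[-i] : L(\alpha)]$.

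\emph{Second computation.} Fix a $K$-graded Verma filtration $0 = P_0 \subset P_1 \subset \cdots \subset P_s = P(\alpha)$ with $P_r/P_{r-1} \cong M(\alpha_r)[n_r]$, provided by the preceding lemma. Two vanishing facts drive the argument. First, a $K$-graded homomorphism $M(\gamma)[k] \to M^-(\beta)[i]$ is determined by the image of its (degree-$k$) highest weight vector, which must land in the simple socle $L(\beta)$ of $M^-(\beta)[i]$ (concentrated in degree $i$); this forces $\gamma = \beta$ and $k = i$ and yields $\dim \Hom_u^K(M(\gamma)[k], M^-(\beta)[i]) = \delta_{\gamma\beta}\delta_{ki}$. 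Second, the $K$-graded $\Ext^1$ between any shifted Verma and any shifted dual Verma must vanish; by (\ref{Kdecomp}) this is implied by the ungraded identity $\Ext^1_u(M(\gamma), M^-(\beta)) = 0$, the standard orthogonality between standard and costandard modules in a highest-weight category. This last vanishing is where I would work hardest: I would use Frobenius reciprocity (since $M(\gamma) = u \otimes_{b^+} k_\gamma$ is $b^+$-induced) to transfer the question to $\Ext^1_{b^+}(k_\gamma, M^-(\beta))$, and then exploit the $b^+$-structure of the dual Verma following the arguments of \cite{Holmes}.

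With these inputs, applying $\Hom_u^K(-, M^-(\beta)[i])$ to each short exact sequence $0 \to P_{r-1} \to P_r \to M(\alpha_r)[n_r] \to 0$ produces a short (not merely long) exact sequence of Hom groups, and summing dimensions along the filtration gives
\[ \dim \Hom_u^K(P(\alpha), M^-(\beta)[i]) = \#\{r : \alpha_r = \beta,\, n_r = i\} = [P(\alpha) : M(\beta)[i]]. \]
Equating the two computations yields the stated reciprocity and, as a bonus, shows that $[P(\alpha) : M(\beta)[i]]$ is independent of the chosen Verma filtration. The main obstacle is the Ext vanishing in the second step; everything else is formal once that is in hand.
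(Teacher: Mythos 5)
Your proposal is correct and takes essentially the same route as the paper: both reduce the statement to the additivity of $\dim\Hom_u^K(-, M^-(\beta)[i])$ along a $K$-graded Verma filtration of $P(\alpha)$, using the vanishing of $\Ext^{>0}_u(M(\gamma),M^-(\beta))$ and the fact that $\Hom_u(M(\gamma),M^-(\beta))=\delta_{\gamma\beta}k$ (which the paper simply cites as \cite[Lemma 4.2]{Holmes} rather than reproving via Frobenius reciprocity as you sketch), and then evaluate $\dim\Hom_u^K(P(\alpha),M^-(\beta)[i])$ via the projective-cover property and the closing remark of Section~\ref{verma_section}. Framing it as computing one dimension in two ways, rather than proving an auxiliary identity by induction and then specialising to $M=P(\alpha)$, is only a cosmetic reorganisation of the same argument.
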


\begin{proof}
  We first show for any $K$-graded module $M$ with a $K$-graded Verma
  filtration that
  \begin{equation}\label{rec} [M : Z(\beta)[i] ] = \dim \Hom_u^K (M,
      Z'(\beta)[i] ). \end{equation}
The proof is by induction on the length of the Verma filtration.
By \cite[Lemma 4.2]{Holmes} or \cite[II.9.9]{Jantzen_book}, $\Ext^{n}_u(Z(\alpha),Z'(\beta))$ is
isomorphic to $k$ if $n=0$ and $\alpha=\beta$ and is zero otherwise.
Using the Ext version of (\ref{Kdecomp}), if $n\neq 0$ or
$\alpha \neq \beta$
then
$\Ext^{K,n}_u(Z(\alpha),Z'(\beta)[i])=0$, and there is a unique
$i$ such that
$\Hom^K_u(Z(\alpha), Z'(\alpha)[i])$ is nonzero.  Since $Z(\alpha)$ has
simple top $L(\alpha)$ in $K$-degree zero and $Z'(\alpha)$ has simple
socle $L(\alpha)$ in $K$-degree zero, this is $i=0$.
This does the case where the Verma filtration has length one.

Now let
\[ 0 \to X \to M \to Z(\alpha)[r] \to 0 \]
be an exact sequence of graded modules, where $X$ has a $K$-graded
Verma filtration.  For any $\beta, j$ we can apply $\Hom_u^K( -,
Z'(\beta)[j])$ and get a long exact sequence in which the Ext groups
vanish by \cite[Lemma 4.2]{Holmes} again.  So
\[ 0 \to \Hom^K_u(Z(\alpha)[r], Z'(\beta)[j]) \to \Hom^K_u(M,
Z'(\beta)[j]) \to \Hom^K_u(X, Z'(\beta)[j]) \to 0 \]
is exact.
$[M: Z(\beta)[i]]$ is the same as $[X: Z(\beta)[i]]$ except if $\beta =
\alpha, i=r$ when it is one larger.  Equation (\ref{rec}) follows by induction.

To conclude, take $M = P(\alpha)$.  Then $\dim \Hom^K_u(P(\alpha), Z'(\beta)[i])$ is
the multiplicity of $L(\alpha)$ as a composition factor of
$Z'(\beta)[i]$, which is equal to the multiplicity of $L(\alpha)$ as a
composition factor of $Z(\beta)[-i]$ by the remark at the end of the
previous section.
\end{proof}
Applying this result and the structure of the Vermas given previously
gives the factors in a $K$-graded Verma filtration of our projectives.
\begin{center} \begin{tabular}{l|l}
    Projective & $K$-graded Verma factors\\
    \hline
    $P(\lambda)$ &$Z(\lambda)$, $Z(\mu)[1]$,
    $Z(\mu)[1]$,$Z(\gamma)[1]$,
    $Z(\gamma)[1]$, $Z(\lambda)[2]$
    \\
    $P(\gamma)$ &$Z(\gamma)$, $Z(\lambda)[1]$, $Z(\mu)[2]$ \\
    $P(\mu)$ &$Z(\mu)$,$Z(\lambda)[1]$,  $Z(\gamma)[2]$
  \end{tabular} \end{center}

\subsection{Hilbert series and the basic algebra}

\begin{defn} The Hilbert series of our block is the $3\times 3$
matrix $P(t)$ with rows and columns labelled by $\gamma, \lambda, \mu$
whose $\alpha,\beta$ entry is $\sum_i t^i \dim \Hom_u^K (P(\alpha)[i],
P(\beta))$.
\end{defn}

Equivalently, the $\alpha,\beta$ entry of $P(t)$ is the
power series in $t$ whose coefficient of $t^i$ is the multiplicity of
$L(\alpha)[i]$ as a composition factor of $P(\beta)$.  The $K$-graded structure of
the Verma modules determined in Section \ref{verma_section} combined with the table
above immediately give:
\begin{thm}\label{hilb}  The Hilbert series of our block is \[ P(t) = \begin{pmatrix}
    1+t^2+t^4 & 3t+3t^3 & 3t^2 \\
    3t+3t^3 & 1+10t^2+t^4 & 3t+3t^3 \\
    3t^2 & 3t+3t^3 & 1+t^2+t^4 \end{pmatrix}. \]
\end{thm}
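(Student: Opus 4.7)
The plan is a direct combinatorial computation assembling two pieces of data already established: the $K$-graded Verma filtration of each indecomposable projective in the table just above, and the $K$-graded composition multiplicities of each Verma module in the block.

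To obtain the latter, I first note that each $M(\alpha)$ has length four and, by the construction preceding the proposition in Section~\ref{verma_section}, its $K$-grading is concentrated in degrees $0, 1, 2$ with head $L(\alpha)$ in $K$-degree $0$ and simple socle in $K$-degree $2$. The two intermediate composition factors visible in the Loewy diagrams of Section~\ref{verma_section} must therefore both lie in $K$-degree $1$. Reading off from those diagrams gives, for example, that $M(\lambda)$ has $L(\lambda)$ in $K$-degrees $0$ and $2$ and one copy each of $L(\gamma)$ and $L(\mu)$ in $K$-degree $1$, with analogous descriptions for $M(\mu)$ and $M(\gamma)$.

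Given this, additivity of composition multiplicities on short exact sequences in the $K$-graded category, combined with the $K$-graded Verma filtrations in the table, yields
\[ \sum_i t^i \, [P(\beta) : L(\alpha)[i]] \;=\; \sum_{\delta, j} [P(\beta) : M(\delta)[j]] \cdot t^{j} \sum_i t^i \, [M(\delta) : L(\alpha)[i]], \]
so the nine entries of $P(t)$ are filled in by direct substitution. The equivalence of this formula with the Hom-based definition of $P(t)$ follows from the standard identity $\dim \Hom_u^K(P(\alpha)[i], P(\beta)) = [P(\beta) : L(\alpha)[i]]$ for projective covers over an algebraically closed field.

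I do not anticipate any real obstacle; the only entry requiring a little care is the $(\lambda, \lambda)$ entry, whose coefficient of $t^2$ must equal $10$. Each of the six Verma factors of $P(\lambda)$ contributes: $M(\lambda)$ gives one copy of $L(\lambda)$ in degree $2$ from its socle and $M(\lambda)[2]$ gives one from its head, while each of the two copies of $M(\mu)[1]$ and each of the two copies of $M(\gamma)[1]$ contributes two copies of $L(\lambda)$ in degree $2$ (the pair of $L(\lambda)$ composition factors in internal $K$-degree $1$ of $M(\mu)$, respectively $M(\gamma)$, shifting into degree $2$), giving in total $1 + 1 + 2\cdot 2 + 2 \cdot 2 = 10$.
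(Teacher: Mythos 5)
Your proposal is correct and takes exactly the approach the paper uses: the paper's proof is the one-line remark that the $K$-graded Verma filtration table together with the $K$-graded composition factors of the Verma modules ``immediately give'' the matrix, and your write-up simply spells out that computation, including the correct identification of the $K$-grading on each Verma module (head in degree $0$, two middle factors in degree $1$, socle in degree $2$) and the bookkeeping for the $(\lambda,\lambda)$ coefficient of $t^2$.
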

The determinant of $P(t)$ is $(t-1)^6(t+1)^6$.
We record the following corollary on the graded structure of the
projectives for future use:

\begin{cor}\label{first_second}
The first and second $K$-graded pieces of the projectives in our block are
as follows.
\begin{center}
  \begin{tabular}{l|lll}
    $\alpha$ & $\lambda$ & $\gamma$ & $\mu$ \\
    \hline
    $P(\alpha)_1$ & $L(\gamma)^{\oplus 3} \oplus L(\mu)^{\oplus 3}$ &
    $L(\lambda)^{\oplus 3}$ & $L(\lambda)^{\oplus 3}$ \\
    $P(\alpha)_2$ & $L(\lambda)^{\oplus 10}$  & $L(\gamma) \oplus
    L(\mu)^{\oplus 3}$ & $L(\mu)\oplus L(\gamma)^{\oplus 3}$
  \end{tabular}
\end{center}
\end{cor}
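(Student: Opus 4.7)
The plan is to read the corollary directly off the Hilbert series in Theorem \ref{hilb}. The key preliminary observation is that since $u$ is Koszul, the degree zero part $u_0$ is semisimple, and hence each graded piece $P(\alpha)_i$, which is a $u_0$-module, is semisimple and therefore determined up to isomorphism by its composition multiplicities. By the interpretation of $P(t)$ given immediately after its definition, the multiplicity of $L(\beta)$ in $P(\alpha)_i$ is precisely the coefficient of $t^i$ in the $(\beta,\alpha)$ entry of the matrix.

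The next step is to extract the coefficients of $t$ and $t^2$ from each column of $P(t)$. For the $\lambda$ column the entries are $3t+3t^3$, $1+10t^2+t^4$, $3t+3t^3$ in rows $\gamma, \lambda, \mu$, giving $P(\lambda)_1 = L(\gamma)^{\oplus 3} \oplus L(\mu)^{\oplus 3}$ and $P(\lambda)_2 = L(\lambda)^{\oplus 10}$. The $\gamma$ column is $1+t^2+t^4$, $3t+3t^3$, $3t^2$, yielding $P(\gamma)_1 = L(\lambda)^{\oplus 3}$ and $P(\gamma)_2 = L(\gamma) \oplus L(\mu)^{\oplus 3}$. The $\mu$ column is symmetric to the $\gamma$ column after interchanging the roles of $L(\gamma)$ and $L(\mu)$, so $P(\mu)_1 = L(\lambda)^{\oplus 3}$ and $P(\mu)_2 = L(\mu) \oplus L(\gamma)^{\oplus 3}$.

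There is no real obstacle: the statement is pure bookkeeping once Theorem \ref{hilb} is in hand, and the only conceptual ingredient beyond reading off coefficients is the semisimplicity of $u_0$, which is built into the Koszulity statement of \cite{Riche}. Alternatively, one could compute each $P(\alpha)_i$ by summing the $K$-graded pieces of the Vermas appearing in the filtration tabulated just before Theorem \ref{hilb}, using that each Verma has Loewy length three with top, middle and socle in $K$-degrees $0$, $1$, $2$ respectively; but extracting from the Hilbert series is cleaner.
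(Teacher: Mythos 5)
Your proposal is correct and matches the paper's (implicit) argument: the corollary is stated immediately after Theorem~\ref{hilb} with no separate proof, precisely because it is obtained by reading off the coefficients of $t$ and $t^2$ in each column of $P(t)$, exactly as you do. Your preliminary remark that $u_0$-semisimplicity makes each $P(\alpha)_i$ determined by multiplicities is the right (if unstated in the paper) justification for writing the graded pieces as direct sums.
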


\begin{defn} The basic algebra of our block is $\Gamma = \End_u
  (P(\mu)\oplus P(\gamma)\oplus P(\lambda))^{\text{op}}$.\end{defn}
Since $\Gamma$ is Morita equivalent \cite[\S 2.2]{BensonI} to a block of the
Koszul algebra $u$, it is also Koszul with respect to the grading
arising from the $K$-grading on the projectives \cite[F.3]{AJS}.  
Note that the dimension of the $n$th $K$-graded part
$\Gamma_n$ is $3 {4 \choose n}$; compare Manin's quantum Grassmann
algebras \cite[\S 8]{manin}.

\begin{thm}
The Hilbert series of the Koszul dual \[\Gamma^! \cong \Ext_u^*(L(\mu)\oplus L(\gamma)\oplus L(\lambda),
L(\mu)\oplus L(\gamma)\oplus L(\lambda) )\] is:
\[  (t-1)^{-4}(t+1)^{-4}\begin{pmatrix}
1+4t^2+t^4 & 3t(t^2+1) & 6t^2 \\
3t(t^2+1) & 1+4t^2 + t^4 & 3t(t^2+1) \\
6t^2 & 3t(t^2+1) & 1+4t^2 + t^4 \end{pmatrix}. \]
\end{thm}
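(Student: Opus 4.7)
The plan is to combine Koszul duality with a direct matrix inversion. For the isomorphism $E^! \cong \Ext^*_u(L(\mu)\oplus L(\gamma)\oplus L(\lambda), L(\mu)\oplus L(\gamma)\oplus L(\lambda))$: since $E$ is Koszul and basic, the standard description of the Koszul dual identifies $E^!$ with $\Ext^*_E(L, L)$ where $L$ is the direct sum of the simple $E$-modules, and transporting this across the Morita equivalence between $E$ and our block of $u$ yields the claim.

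For the Hilbert series I would use the numerical consequence of Koszul duality that, in the paper's conventions, the Hilbert matrix of $E^!$ equals $P(-t)^{-1}$. This can be cited from \cite{BGS}, but it also follows directly: each simple $L(\alpha)$ admits a minimal linear projective resolution in the $K$-graded category whose $n$-th term is $\bigoplus_\beta P(\beta)[n]^{\oplus d^n_{\alpha\beta}}$ with $d^n_{\alpha\beta} = \dim \Ext^n(L(\alpha), L(\beta))$, and taking the alternating sum in the $K$-graded Grothendieck group and pairing with each simple gives the matrix identity $Q(-t) P(t)^T = I$ for $Q(t)_{\alpha\beta} = \sum_n t^n d^n_{\alpha\beta}$. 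Unwinding indexing conventions identifies the Hilbert matrix of $E^!$ with $Q(t)^T$, which is $P(-t)^{-1}$.

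It then remains to invert the symmetric matrix $P(-t)$ explicitly. I would use the adjugate formula with the denominator $\det P(-t) = \det P(t) = (t^2-1)^6$ already recorded after Theorem~\ref{hilb}, and by the symmetry of $P(-t)$ only six $2 \times 2$ cofactors need be computed. The key observation is that each cofactor is divisible by $(t^2-1)^2$; cancelling this common factor drops the denominator to $(t-1)^4(t+1)^4$ and leaves cofactor numerators matching exactly the polynomial entries in the statement.

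The main obstacle is verifying the $(t^2-1)^2$ divisibility of each cofactor. This is a routine polynomial computation, but the cancellation is not apparent from the form of $P(-t)$, and it is precisely this phenomenon that reduces the order-six pole in $\det P(t)$ to the order-four pole appearing in the final Hilbert series of $E^!$. Once identified and cancelled, the theorem follows by inspection of the remaining numerators.
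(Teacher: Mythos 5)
Your proposal is correct and follows essentially the same route as the paper: cite \cite[Lemma~2.11]{BGS} (the paper writes the identity as $P(-t)^{-T}$, but since $P(t)$ is symmetric this agrees with your $P(-t)^{-1}$) and then invert $P(-t)$ from Theorem~\ref{hilb}. The paper's proof is a one-liner that leaves the inversion implicit, whereas you spell out the adjugate calculation and the $(t^2-1)^2$ cancellation in each cofactor — which does indeed check out — but this is a matter of detail, not a different method.
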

\begin{proof}
By \cite[Lemma 2.11]{BGS} the Hilbert series of $\Gamma^!$ is $P(-t)^{-T}$,
so the result follows from Theorem \ref{hilb}. \end{proof}
As power series, the diagonal entries are $\sum_k (k+1)^3 t^{2k}$ and
the off-diagonal entries are $\frac{1}{2}\sum_k
(k+1)(k+2)(2k+3)t^{2k+1}$ and $\sum_k k(k+1)(k+2) t^{2k}$.

\subsection{Weight-graded Verma filtrations}
Each projective has a filtration by $G_1T$ Verma
modules, and we write
$[P(\alpha):Z(\beta)]$ for the number of factors in a
Verma filtration of $P(\alpha)$ isomorphic as $G_1T$-modules 
to $Z(\beta)$.
In \cite[Theorem 5.1]{Holmes} the authors prove a result determining
these multiplicities
in a category of graded $u$-modules, but it is stated only
for a $\ZZ$-grading obtained by flattening the weight grading. Their
proofs go through in the weight-graded category however, or we can
instead apply the $n=1$ case of \cite[Satz 3.8]{Jantzen_uber}, to get:

\begin{prop}
  Let $\alpha, \beta \in X$. Then
  \[  [P(\alpha):Z(\beta)] =
  [Z(\beta):L(\alpha)]. \]
\end{prop}
Again, using the structure of the Vermas given earlier gives:
\begin{center}\begin{tabular}{l|l}
  Projective & Weight-graded Verma factors \\ \hline
  $P(\gamma)$ & $Z(\gamma), Z(\lambda+p\vl),
Z(\mu+p\vt)$ \\
  $P(\lambda)$ & $Z(\lambda),
Z(\mu+p\vl), Z(\mu-p\vl+p\vt), Z(\gamma+p\vl-p\vt),
Z(\gamma-p\vt), Z(\lambda+p\vl+p\vt)$
  \\
  $P(\mu)$ & $Z(\mu), Z(\lambda+p\vt), Z(\gamma+p\vl).$
\end{tabular}\end{center}
This determines the composition series of the projectives 
as $G_1T$-modules.  Figures~\ref{Pgamma}, \ref{Pmu} and
\ref{Plambda} display these structures, where a double circle represents
a composition factor of multiplicity two and so on, and we have written
$\alpha$ instead of $L(\alpha)$.  The coloured diamonds join the
composition factors of a term in the Verma filtration of $P(\alpha)$,
blue, orange and teal diamonds are shifts of $Z(\lambda)$, $Z(\gamma)$ and
$Z(\mu)$ respectively.
\begin{figure}
\begin{tikzpicture}
\node[fill=black,circle,inner sep=1pt,label=above:$\gamma$] at (0,0) {}; 
\node[fill=black,circle,inner sep=1pt,label=above:{$\mu+p\vt$}] at (0,1) {};
\node[fill=black,circle,inner sep=1pt,label=below:{$\lambda-p\vt$}] at (0,-1) {};
\node[fill=black,circle,inner sep=1pt,label=right:{$\lambda+p\vl$}] at ({0.5*sqrt(3)}, 0.5) {};
\node[fill=black,circle,inner sep=1pt,label=left:{$\mu-p\vl$}] at ({-0.5*sqrt(3)}, -0.5) {};
\node[fill=black,circle,inner sep=1pt,label=right:{$\mu+p\vl-p\vt$}] at ({0.5*sqrt(3)}, -0.5) {};
\node[fill=black,circle,inner sep=1pt,label=left:{$\lambda-p\vl+p\vt$}] at ({-0.5*sqrt(3)}, 0.5) {};
\draw [gray] (0,0) circle (3pt);
\draw [gray] (0,0) circle (5pt);
\draw [gray] ({0.5*sqrt(3)}, 0.5) circle (3pt);
\draw [gray] (0, -1) circle (3pt);
\draw [gray] ({-0.5*sqrt(3)}, 0.5) circle (3pt);
\draw[color=blue]
(0.02,-0.02) -- (0.02,-1) -- ({0.5*sqrt(3)},-0.5) -- ({0.5*sqrt(3)},0.48) -- cycle;
\draw[color=orange]
(-0.02,-0.02) -- ({-0.5*sqrt(3)}, 0.48) -- ({-0.5*sqrt(3)},-0.5) --
(-0.02,-1) -- cycle;
\draw[color=teal]
(0,0.02) -- ({0.5*sqrt(3)},0.52) -- (0,1) -- ({-0.5*sqrt(3)},0.52) -- cycle;
\end{tikzpicture}
\caption{$P(\gamma)$} \label{Pgamma} \end{figure}
\begin{figure}
\begin{tikzpicture}
  \node[fill=black,circle,inner sep=1pt,label=below:$\mu$] at (0,0) {}; 
  \node[fill=black,circle,inner sep=1pt,label=above:{$\lambda+
  p\vt$}] at (0,1) {};
  \node[fill=black,circle,inner sep=1pt,label=below:{$\gamma-p\vt$}] at (0,-1) {};
  \node[fill=black,circle,inner sep=1pt,label=right:{$\gamma+p\vl$}] at ({0.5*sqrt(3)}, 0.5) {};
  \node[fill=black,circle,inner sep=1pt,label=left:{$\lambda-p\vl$}] at ({-0.5*sqrt(3)}, -0.5) {};
  \node[fill=black,circle,inner sep=1pt,label=right:{$\lambda+p\vl-p\vt$}] at ({0.5*sqrt(3)}, -0.5) {};
  \node[fill=black,circle,inner sep=1pt,label=left:{$\gamma-p\vl+p\vt$}] at ({-0.5*sqrt(3)}, 0.5) {};
\draw [gray] (0,0) circle (3pt);
\draw [gray] (0,0) circle (5pt);
\draw [gray] ({0.5*sqrt(3)}, -0.5) circle (3pt);
\draw [gray] (0, 1) circle (3pt);
\draw [gray] ({-0.5*sqrt(3)}, -0.5) circle (3pt);
\draw [color=blue]
(-0.02,0.02)--(-0.02,1)--({-0.5*sqrt(3)},0.5)--({-0.5*sqrt(3)},-0.48)--cycle;
\draw [color=orange]
(0.02,
0.02)--(0.02,1)--({0.5*sqrt(3)},0.5)--({0.5*sqrt(3)},-0.48)--cycle;
\draw[color=teal]
(0,-0.02)--({0.5*sqrt(3)},-0.52)--(0,-1)--({-0.5*sqrt(3)},-0.52)--cycle;
\end{tikzpicture}
\caption{$P(\mu)$} \label{Pmu} \end{figure}
\begin{figure}
\begin{tikzpicture}
  \node[fill=black,circle,inner sep=1pt,label={[label
  distance=0.1cm]180:{$\lambda$}}]  at (0,0) {}; 
  \node[fill=black,circle,inner sep=1pt,label={[label
  distance=0.18cm]90:{$\gamma+p\vt$}}] at (0,1) {};
  \node[fill=black,circle,inner sep=1pt,label={[label
  distance=0.15cm]270:{$\mu-p\vt$}}] at (0,-1) {};
  \node[fill=black,circle,inner sep=1pt,label={[label
  distance=0cm]10:{$\mu+p\vl$}}] at ({0.5*sqrt(3)}, 0.5) {};
  \node[fill=black,circle,inner sep=1pt,label={[label
  distance=0cm]250:{$\gamma-p\vl$}}] at ({-0.5*sqrt(3)}, -0.5) {};
  \node[fill=black,circle,inner sep=1pt,label=right:{$\gamma+p\vl-p\vt$}] at ({0.5*sqrt(3)}, -0.5) {};
  \node[fill=black,circle,inner sep=1pt,label={[label
  distance=0cm]140:{$\mu-p\vl+p\vt$}}] at ({-0.5*sqrt(3)}, 0.5) {};
  \node[fill=black,circle,inner sep=1pt,label=right:{$\lambda+p\alpha_1$}] at ({sqrt(3)},0) {};
  \node[fill=black,circle,inner sep=1pt,label=left:{$\lambda-p\alpha_1$}] at ({-sqrt(3)},0) {};
  \node[fill=black,circle,inner
  sep=1pt,label=right:{$\lambda+p\alpha_1+p\alpha_2$}] at ({0.5*sqrt(3)}, 1.5) {};
  \node[fill=black,circle,inner
	  sep=1pt,label=left:{$\lambda-p\alpha_1-p\alpha_2$}] at ({-0.5*sqrt(3)}, -1.5) {};
  \node[fill=black,circle,inner sep=1pt,label=right:{$\lambda-p\alpha_2$}] at ({0.5*sqrt(3)}, -1.5) {};
  \node[fill=black,circle,inner sep=1pt,label=left:{$\lambda+p\alpha_2$}] at ({-0.5*sqrt(3)}, 1.5) {};
\draw [gray] (0,0) circle (2pt);
\draw [gray] (0,0) circle (3pt);
\draw [gray] (0,0) circle (4pt);
\draw [gray] (0,0) circle (5pt);
\draw [gray] (0,0) circle (6pt);
\draw [gray] ({0.5*sqrt(3)}, -0.5) circle (3pt);
\draw [gray] (0, 1) circle (3pt);
\draw [gray] ({-0.5*sqrt(3)}, -0.5) circle (3pt);
\draw [gray] ({0.5*sqrt(3)}, 0.5) circle (3pt);
\draw [gray] (0, -1) circle (3pt);
\draw [gray] ({-0.5*sqrt(3)}, 0.5) circle (3pt);
\pgfmathsetmacro{\EEE}{0.04}
\draw [color=blue]
($(0,0)+\EEE/3*({0.5*sqrt(3)},1.5)$)--($(0,1)+\EEE/3*({0.5*sqrt(3)},1.5)$)
-- ({0.5*sqrt(3)},1.5) --
($({0.5*sqrt(3)},0.5)+\EEE/3*({0.5*sqrt(3)},1.5)$) -- cycle;
\draw[color=teal]
($(0,0)+\EEE*(1,0)$)--($({0.5*sqrt(3)},0.5)+\EEE*(1,0)$)
-- ({sqrt(3)},0) --
($({0.5*sqrt(3)},-0.5)+\EEE*(1,0)$) -- cycle;
\draw[color=orange]
($(0,0)+\EEE/3*({0.5*sqrt(3)},-1.5)$)--($(0,-1)+\EEE/3*({0.5*sqrt(3)},-1.5)$)
-- ({0.5*sqrt(3)},-1.5) --
($({0.5*sqrt(3)},-0.5)+\EEE/3*({0.5*sqrt(3)},-1.5)$) -- cycle;
\draw[color=blue]
($(0,0)-\EEE/3*({0.5*sqrt(3)},1.5)$)--($(0,-1)-\EEE/3*({0.5*sqrt(3)},1.5)$)
-- ({-0.5*sqrt(3)},-1.5) --
($({-0.5*sqrt(3)},-0.5)-\EEE/3*({0.5*sqrt(3)},1.5)$) -- cycle;
\draw[color=teal]
($(0,0)-\EEE*(1,0)$)--($({-0.5*sqrt(3)},-0.5)-\EEE*(1,0)$)
-- ({-sqrt(3)},0) --
($({-0.5*sqrt(3)},0.5)-\EEE*(1,0)$) -- cycle;
\draw[color=orange]
($(0,0)-\EEE/3*({0.5*sqrt(3)},-1.5)$)--($(0,1)-\EEE/3*({0.5*sqrt(3)},-1.5)$)
-- ({-0.5*sqrt(3)},1.5) --
($({-0.5*sqrt(3)},0.5)-\EEE/3*({0.5*sqrt(3)},-1.5)$) -- cycle;
\end{tikzpicture}
\caption{$P(\lambda)$}\label{Plambda}\end{figure}
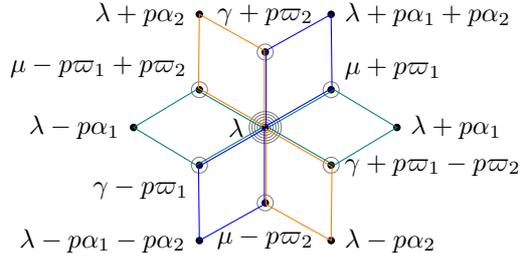

\section{$G$-module structure}\label{sec:action}

Let $G=SL_3(k)$.  The Ext-algebra
\begin{equation*}
E=	\Ext^*_u (L(\lambda)\oplus L(\gamma)\oplus L(\mu),
	L(\lambda)\oplus L(\gamma)\oplus L(\mu))
\end{equation*}
is a $G$-module, as in \cite{Andersen}, with $G_1$ acting trivially.
We can determine the $G$-module structure of the degree one part of
this algebra:

\begin{lem}
As $G$-modules,
\begin{align*}
	\Ext^1_u(L(\lambda),L(\mu)) \cong \Ext^1_u(L(\gamma),L(\lambda))
	& \cong L(0,p) \\
\Ext^1_u(L(\mu),L(\lambda)) \cong \Ext^1_u(L(\lambda),L(\gamma))
	& \cong L(p,0).
\end{align*}
\end{lem}
\begin{proof}
	The dimensions of these Ext-groups are determined by Corollary
	\ref{first_second}. The $G$-module structure follows from the
	inequality of \cite[Lemma 5.1]{Andersen}. Alternatively, we
	can read off some of the $T$-weights of these modules from the
	extensions given by quotients of the Verma modules displayed in
	Section \ref{verma_section}, which together with the dimensions
	are enough to determine the $G$-module structure.
\end{proof}

\section{Presentation of the basic algebra} \label{sec:pres}

In this section we consider the problem of finding generators and
relations for the basic algebra of our block. Because this algebra is
Koszul, we instead solve the equivalent problem of finding an algebra
whose quadratic
dual is the Ext-algebra $E$ of the previous section: this algebra will
be the opposite algebra of the basic algebra we want.

That is, letting
$\mathbbm{k} = E_0 = k\langle e_\lambda, e_\gamma, e_\mu\rangle$
where $e_\alpha$ is the identity map on $P(\alpha)$ and zero on the
other summands, we look for a $\mathbbm{k}$-bimodule $V$ and subspace $R
\subset V\otimes_\mathbbm{k}V$ such that if $\Lambda =
T_\mathbbm{k}(V)/(R)$ then $\Lambda^!=T_\mathbbm{k}(V^*)/(R^\perp)\cong
E$.

Since $G$ acts on $E$, we can take $V$ to be a $G$-module such that the
dual action on $V^*$ agrees with the action of $G$ on the degree one
part of $E$ which can be identified with $V^*$.  So
\begin{align*}
	e_\lambda V e_\mu \cong e_\gamma V e_\lambda & \cong L(0,p) \\
	e_\mu V e_\lambda \cong e_\lambda V e_\gamma & \cong L(p,0)
\end{align*}
and $e_\alpha Ve_\alpha = e_\gamma V e_\mu = e_\mu V e_\gamma = 0$ for
all $\alpha = \lambda,\gamma,\mu$.
Since the action of $G$ is by algebra automorphisms,
$R^\perp$ and $R$ will be $G$-submodules of
$V^*\otimes_\mathbbm{k}V^*$ and $V\otimes_\mathbbm{k}V$ respectively.

The $\alpha, \beta$ entry of the following table records the
$G$-module structure on $e_\alpha V\otimes_\mathbbm{k} V e_\beta$:

\begin{center}
\begin{tabular}{c|ccc}
  & $\gamma$ & $\lambda$ & $\mu$ \\
  \hline
 $\gamma$& $L(0,0)\oplus L(p,p)$ & &$L(0,p)\oplus L(2p,0)$ \\
 $\lambda$ & & $L(0,0)^{\oplus 2} \oplus L(p,p)^{\oplus 2}$ & \\
 $\mu$ &$ L(p,0)\oplus L(0,2p)$ & & $L(0,0) \oplus L(p,p)$ \end{tabular}
\end{center}

Write $v_{p,0}$, $w_{p,0}$ for highest weight vectors of $e_\lambda V
e_\gamma$ and $e_\mu V e_\lambda$, and $v_{0,p}, w_{0,p}$ for highest
weight vectors of $e_\gamma V e_\lambda$ and $e_\lambda V e_\mu$
respectively.

Corollary \ref{first_second} tells us the dimensions of each $e_\alpha
\Gamma_2 e_\beta$. For example, $e_\gamma \Gamma_2 e_\gamma$ is one-dimensional,
and so $e_\gamma Re_\gamma$ must be a $G$-submodule of
codimension one in $e_\gamma V\otimes_\mathbbm{k} V e_\gamma$, and therefore consists of the $L(p,p)$ summand from
the top row of the
previous table, which is generated as a $G$-module by
$v_{0,p}\otimes v_{p,0}$. Similarly, $R$ contains exactly the $L(2p,0)$
summand on the top row of the above table, and the $L(0,2p)$ and
$L(p,p)$ summands on the bottom row.

We still need to identify $e_\lambda R e_\lambda$, which must be an
eight-dimensional submodule of $e_\lambda V\otimes_\mathbbm{k}
Ve_\lambda$ by Corollary \ref{first_second}.  It is therefore isomorphic
to $L(p,p)$ as a $G$-module, and so is generated as a
$G$-module by some element of
the form $r v_{p,0}\otimes v_{0,p} + s w_{0,p}\otimes w_{p,0}$ for $r,s \in
k$.

Suppose $r=0$.  Then in the quadratic dual algebra $\Lambda^! \cong E$
we have $v_{p,0}^* \cdot v_{0,p}^* = 0$.  Now $v_{p,0}^*$ corresponds to
a weight-graded extension of $L(\lambda)$ by $L(\gamma+p\vl)$ and
$v_{0,p}^*$ corresponds to a weight-graded extension of $L(\gamma)$ by
$L(\lambda+p\vt)$. If this product were zero in the Ext-ring, there
would be a weight-graded uniserial module with top $L(\lambda)$, middle
composition factor $L(\gamma-p\vl)$ and socle $L(\lambda-p\vl-p\vt)$ as in
\cite[Proposition 2.3(a)]{BC}.
Such a module is necessarily highest weight, hence a quotient
of $Z(\lambda)$, but the structure of $Z(\lambda)$ given in Section
\ref{verma_section} shows it has no such quotient. Therefore $r\neq 0$, and similarly $s \neq 0$. By
rescaling $v_{0,p}$ and the $G$-submodule it generates, we may
assume $r=s=1$.
We have proved:

\begin{thm} \label{presentation}
  The quadratic dual of $E$ is isomorphic to $T_\mathbbm{k}(V)/(R)$, where $V$ and $\mathbbm{k}$ are as
  above and $R$ is the $G$-submodule of $V\otimes_\mathbbm{k} V$
  generated by $v_{0,p}\otimes v_{p,0}$,  $w_{p,0}\otimes v_{p,0}$, $w_{p,0}
  \otimes w_{0,p}$, $v_{0,p}\otimes w_{0,p}$, $v_{p,0}\otimes v_{0,p} +
  w_{0,p}\otimes w_{p,0}$.
\end{thm}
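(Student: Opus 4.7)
The plan is to combine Koszulity of $E$ with the $\ssl_3(k)$-action by derivations from Section \ref{sec:action}. Since $E$ is Koszul, it is quadratic, so $E \cong T_\mathbbm{k}(V)/(R)$ for some $R \subseteq V \otimes_\mathbbm{k} V$. Because $\ssl_3(k)$ acts by grading-preserving derivations, $R$ must be an $\ssl_3(k)$-submodule. The task reduces to identifying this submodule block by block, where a block is $e_\alpha R e_\beta$ for $\alpha, \beta \in \{\lambda, \gamma, \mu\}$.

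First I would decompose each $e_\alpha V \otimes_\mathbbm{k} V e_\beta$ as an $\ssl_3(k)$-module using the $\ssl_3(k)$-module structure on the components of $V$ identified in the previous section together with the standard Clebsch-Gordan rules for $\ssl_3(k)$. Then Corollary \ref{first_second} gives $\dim e_\alpha E_2 e_\beta$ and hence the dimension of each $e_\alpha R e_\beta$. Whenever the decomposition of $e_\alpha V\otimes_\mathbbm{k} Ve_\beta$ is multiplicity-free the dimension count pins down $e_\alpha R e_\beta$ uniquely as a sum of isotypic components; this handles all blocks except $e_\lambda R e_\lambda$, and the highest weight generators can be read off from the Clebsch--Gordan decomposition directly in terms of $v_{1,0}, v_{0,1}, w_{1,0}, w_{0,1}$.

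The main obstacle will be the diagonal block $e_\lambda R e_\lambda$, whose ambient space decomposes with multiplicity as $L(0,0)^{\oplus 2}\oplus L(1,1)^{\oplus 2}$. The dimension count forces $e_\lambda R e_\lambda$ to be eight-dimensional, hence an $L(1,1)$ summand, generated by a highest weight vector $r v_{1,0}\otimes v_{0,1}+ s w_{0,1}\otimes w_{1,0}$ with $r,s \in k$ not both zero. The crux is to rule out $r=0$ and $s=0$. For this I would pass to the Koszul dual $E^!\cong \Ext^*_u(\bigoplus_\alpha L(\alpha),\bigoplus_\alpha L(\alpha))$: the relation $r=0$ translates to the vanishing of the Yoneda product $v_{1,0}^*\cdot v_{0,1}^*$ of $\Ext^1$ classes, which by the criterion \cite[Proposition 2.3(a)]{BC} corresponds to a weight-graded uniserial module with top $L(\lambda)$, middle $L(\gamma)[-p,0]$, and socle $L(\lambda)[-p,-p]$. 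Any such module is highest weight and hence a quotient of $M(\lambda)$, contradicting the submodule lattice of $M(\lambda)$ recorded in Section \ref{verma_section}. A symmetric argument (exchanging the roles of $\mu$ and $\gamma$) gives $s\neq 0$, and rescaling $v_{0,1}$ together with the $\ssl_3(k)$-submodule it generates lets us normalise to $r=s=1$.
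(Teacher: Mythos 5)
Your proposal follows essentially the same argument as the paper's own proof: Koszulity gives a quadratic presentation, the $\ssl_3(k)$-action forces $R$ to be a submodule, the dimension counts from Corollary~\ref{first_second} pin down all blocks except $e_\lambda R e_\lambda$, and the crucial step ruling out $r=0$ (and by symmetry $s=0$) via the Koszul dual, \cite[Proposition 2.3(a)]{BC}, and the submodule lattice of $M(\lambda)$ is exactly the paper's argument. The only thing left implicit in your sketch is the explicit Clebsch--Gordan table, but the paper supplies the same information in tabular form, so this is a presentational detail rather than a gap.
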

The following table displays $R$ as a submodule of
$V\otimes_\mathbbm{k}V$:
\begin{center}
  \begin{tabular}{l|lcl}
    & $\gamma$ & $\lambda$ & $\mu$ \\ \hline
    $\gamma$ & $L(p,p)$ & & $L(2p,0)$ \\
    $\lambda$& & Diagonal $L(p,p)$ & \\
    $\mu$ & $L(0,2p)$ & & $L(p,p)$
  \end{tabular}
\end{center}

In fact $\Lambda^{\text{op}}\cong \Lambda$ in this case,
so $\Lambda$ itself is isomorphic to the basic algebra of our block.

\section{Hochschild cohomology} \label{hoch_section}
It is well-known that any non-semisimple block of $u(\ssl_2(k))$ has as
its basic algebra a
smash product $\Sigma = kC_2 \ltimes k[x,y]/(x^2,y^2)$ where the cyclic
group $C_2=\langle g \rangle$ acts by $gxg=-x$ and $gyg=-y$.  A
straightforward computation with this basic algebra shows 
the centre $Z(\Sigma)$ has dimension 3 and
$\HH^1(\Sigma) \cong \mathfrak{gl}_2(k)$ as a Lie algebra.  In this
section we investigate the centre and Hochschild cohomology of our block
of $u$. Since the block algebra is $K$-graded, its Hochschild cohomology has an
internal grading coming from this $K$-grading. We will calculate the centre
of $\Lambda$ and the internal degree zero part of its first Hochschild
cohomology, showing that as in the $\ssl_2$ case, it is isomorphic to
the general linear Lie algebra.

Let $\Lambda = T_\mathbbm{k}(V)/(R)$ as in the previous section.  Since $G$ acts on $\Lambda$ with
$G_1$ acting trivially, there is an induced action of the Lie algebra $\ssl_3(k)$
(with its weight degrees multiplied by $p$)
on $\Lambda$ by derivations.  Let $v_{-p,0}=F_1F_2v_{0,p}$, $v_{p,-p}=F_2v_{0,p}$,
$v_{-p,p}=-F_1v_{p,0}$, $v_{0,-p}=F_2F_1v_{p,0}$ in $\Lambda$, and make similar
definitions for the $w$s.
The relations for $\Lambda$ are then
\begin{center}
  \begin{tabular}{ll}
    $e_\gamma R e_\gamma$: & $v_\alpha v_{-\beta} = 0$ for  $\alpha \neq
    \beta$, \\
    &  $v_\alpha v_{-\alpha} = v_\beta v_{-\beta}$ for all $\alpha,\beta$. \\
    $e_\mu R e_\mu$: &  $w_{-\alpha} w_\beta=0$ for $\alpha \neq \beta$, \\
    &  $w_{-\alpha} w_{\alpha}=w_{-\beta} w_{\beta}$ for all
    $\alpha,\beta$. \\
    $e_\mu R e_\gamma$: & $w_{-\alpha} v_{-\beta} + w_{-\beta}
    v_{-\alpha} =0$ for all $\alpha,\beta$. \\
$ e_\gamma R e_\mu$: &
 $v_\alpha w_\beta + v_\beta w_\alpha =0$ for all $\alpha,\beta$. \\
$e_\lambda R e_\lambda$: &
$v_{-\alpha} v_\beta + w_\beta w_{-\alpha}$ for   $\beta \neq \alpha$, \\
&
$v_{-\alpha}v_{\alpha}+w_{\alpha}w_{-\alpha}=v_{-\beta}v_{\beta}+w_{\beta}w_{-\beta}$
for all $\alpha,\beta$
 \end{tabular}\end{center}
where $\alpha,\beta \in \{(p,-p), (-p,0), (0,p)\}$.
Let
\begin{center}
\begin{tabular}{llll} $\mathcal{V} = \sum v_\alpha v_{-\alpha}$ &
 $ \mathcal{W} =\sum
w_{-\alpha} w_\alpha $ \\
$z_{\gamma\lambda}  = 3v_{0,p}v_{0,-p}
+\mathcal{V}-2\mathcal{W}$ & $ z_{\mu\lambda} = 3w_{0,-p}w_{0,p} +
\mathcal{W}-2\mathcal{V}$\end{tabular}
\end{center}
where both sums are over $\alpha \in \{(-p,p), (p,0),
(0,-p)\}$.  Let  $z_\alpha$ span the socle of $\Lambda e_\alpha$ for $\alpha =
\lambda, \mu, \gamma$.

\begin{prop}The centre $Z(\Lambda)$ has basis $1, z_\lambda, z_\mu, z_\gamma,
  z_{\mu\lambda}, z_{\gamma\lambda}$.\end{prop}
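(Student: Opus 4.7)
The plan is to decompose any central element along the primitive idempotents and the $K$-grading. Since $z \in Z(\Lambda)$ commutes with each $e_\alpha$, we have $z = \sum_\alpha e_\alpha z e_\alpha \in \bigoplus_\alpha e_\alpha \Lambda e_\alpha$. By Theorem~\ref{hilb} each diagonal block $e_\alpha \Lambda e_\alpha$ is concentrated in $K$-degrees $0$, $2$ and $4$, so $Z(\Lambda) = Z_0 \oplus Z_2 \oplus Z_4$ and each piece may be treated separately.

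In degree $0$, writing $z^{(0)} = a_\lambda e_\lambda + a_\gamma e_\gamma + a_\mu e_\mu$ and commuting with any generator $v \in e_\beta V e_\alpha$ forces $a_\alpha = a_\beta$; since $V$ contains arrows between $\lambda$ and both $\gamma$ and $\mu$, we get $Z_0 = k\cdot 1$. In degree $4$, Theorem~\ref{hilb} gives $\Lambda_{\geq 5} = 0$, so for any $z \in \bigoplus_\alpha e_\alpha \Lambda_4 e_\alpha$ and $v \in V$ both $zv$ and $vz$ vanish on degree grounds; every element of this three-dimensional space is therefore central, and $Z_4 = k z_\lambda \oplus k z_\gamma \oplus k z_\mu$.

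The main obstacle is $Z_2$, where $\bigoplus_\alpha e_\alpha \Lambda_2 e_\alpha$ has dimension $1 + 10 + 1 = 12$. Here I would use the $\ssl_3(k)$-action from Section~\ref{sec:action}: derivations preserve $Z(\Lambda)$, since $[D(z), x] = D[z,x] - [z, D(x)]$ vanishes whenever $z$ is central, so $Z_2$ is an $\ssl_3(k)$-submodule of the ambient space. Combining the $\ssl_3(k)$-module decompositions from Section~\ref{sec:pres} with the quadratic relations, this ambient space is isomorphic to $L(0,0)^{\oplus 4} \oplus L(1,1)$, with the four trivial summands spanned by the invariants $v_{0,1}v_{0,-1}$, $w_{0,-1}w_{0,1}$, $\mathcal{V}$ and $\mathcal{W}$, and the $L(1,1)$ summand sitting inside $e_\lambda \Lambda_2 e_\lambda$.

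I would first show $Z_2 \cap L(1,1) = 0$ by fixing a highest-weight vector $r$ of the $L(1,1)$-summand and computing $[r, v_{1,0}] = r v_{1,0} \in e_\lambda \Lambda_3 e_\gamma$ via the relations of Theorem~\ref{presentation}, verifying directly that it is nonzero; irreducibility of $L(1,1)$ combined with $\ssl_3(k)$-stability of $Z_2$ then rules out the whole $L(1,1)$-component. On the remaining four-dimensional invariant subspace, writing a general element as $a\,v_{0,1}v_{0,-1} + b\mathcal{V} + c\mathcal{W} + d\,w_{0,-1}w_{0,1}$ and using $\ssl_3(k)$-equivariance once more, centrality reduces to commutation with just the two representatives $v_{1,0}$ and $w_{1,0}$; these commutators vanish in the three-dimensional spaces $e_\lambda \Lambda_3 e_\gamma$ and $e_\mu \Lambda_3 e_\lambda$, and I expect the resulting linear conditions to cut the parameter space down to the two-dimensional span of the coefficient vectors of $z_{\gamma\lambda}$ and $z_{\mu\lambda}$. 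The delicate part is carrying out these commutator calculations by repeatedly substituting via the mixed $vv/ww$ relations in $e_\lambda R e_\lambda$ and the cross-relations in $e_\gamma R e_\mu$ and $e_\mu R e_\gamma$; this bookkeeping, and especially the verification that the $L(1,1)$ commutator survives, is the main computational obstacle.
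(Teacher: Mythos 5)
Your proposal follows exactly the same route as the paper's proof: decompose $Z(\Lambda)$ along the idempotents into $\bigoplus_\alpha e_\alpha\Lambda e_\alpha$ and along the $K$-grading into degrees $0,2,4$; observe that degree $0$ is spanned by $1$ and degree $4$ is the whole socle; use the $\ssl_3(k)$-equivariance of the centre together with the decomposition $\bigoplus_\alpha e_\alpha\Lambda_2 e_\alpha\cong L(0,0)^{\oplus 4}\oplus L(1,1)$ to reduce the degree-2 analysis to excluding $L(1,1)$ and then cutting the four-dimensional trivial space down to two dimensions. The explicit verification that $z_{\gamma\lambda}$ and $z_{\mu\lambda}$ are central is also the same in spirit.

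One computational caveat in the $L(1,1)$ exclusion step, which you should be aware of before carrying it out: the commutator you propose, $[r,v_{1,0}]$ with $r=v_{1,0}v_{0,1}$, actually vanishes. Indeed $v_{1,0}r=0$ by idempotent orthogonality, and $rv_{1,0}=v_{1,0}(v_{0,1}v_{1,0})=0$ because $v_{0,1}\otimes v_{1,0}$ is one of the defining relations in $e_\gamma R e_\gamma$ from Theorem~\ref{presentation}. (The paper's own proof asserts the same non-vanishing, so this slip is inherited.) You should instead commute with a different generator, e.g.\ $v_{0,-1}\in e_\lambda V e_\gamma$: then $[r,v_{0,-1}]=v_{1,0}\,(v_{0,1}v_{0,-1})$, where $v_{0,1}v_{0,-1}$ spans $e_\gamma\Lambda_2 e_\gamma$, and one checks this triple product is nonzero in $e_\lambda\Lambda_3 e_\gamma$. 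With that substitution your outline matches the published argument step for step.
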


\begin{proof}
We first show that the given elements are central.   Each of the elements
$z_\alpha$ lie in the centre because they are killed on both sides by
any element of $V$. Using the relations above: 
\begin{center}
  \begin{tabular}{l|llll}
    $v_x$ & $v_x \mathcal{V}$ & $\mathcal{V}v_x$ & $v_x\mathcal{W}$ &
    $\mathcal{W}v_x$ \\
    \hline
    $e_\lambda V e_\gamma$ & 0 & $v_x v_{-x} v_x$ & 0 & $2v_x v_{-x}
    v_x$ \\
    $e_\gamma V e_\lambda$ & $v_x v_{-x}v_x$ & 0 & $2v_xv_{-x}v_x$ & 0
    \\
    \multicolumn{5}{c}{ }  \\
    $w_x$ & $w_x \mathcal{V}$ & $\mathcal{V}w_x$ & $w_x\mathcal{W}$ &
    $\mathcal{W}w_x$ \\
    \hline
    $e_\lambda V e_\mu$ & $2w_xw_{-x}w_x$ & 0& $w_xw_{-x}w_x$ & 0
     \\
     $e_\mu V e_\lambda$ & 0 & $2w_xw_{-x}w_x$ & 0 & $w_xw_{-x}w_x$.
  \end{tabular}
\end{center}
From these it follows easily that $z_{\gamma \lambda}$ and
$z_{\mu\lambda}$ are central.

Any element of the centre lies in $e_\lambda \Lambda e_\lambda +
 e_\gamma \Lambda e_\gamma + e_\mu \Lambda e_\mu$, since this is the condition to
 commute with the idempotents $e_\mu, e_\gamma$ and $e_\lambda$.
Since it is homogeneous with respect to the $K$-grading, it can be
non-zero only in degrees $0,2,4$ by considering the Hilbert series for
$\Lambda$.  Furthermore derivations preserve the centre of an algebra,
so $Z(\Lambda)$ is a
$\ssl_3(k)$-submodule.

From the Hilbert series, $e_\alpha \Lambda_2 e_\alpha$ is
one-dimensional for $\alpha=\mu, \gamma$ and $L(p,p)\oplus
L(0,0)^{\oplus 2}$ as a $G$-module for $\alpha = \lambda$.
The element $v_{p,0}v_{0,p}$ does not commute with $v_{p,0}$, thus the
degree two part of the centre is contained in the trivial summands. No
nonzero element of the two-dimensional subspace spanned by $v_{0,p}v_{0,-p}$ and
$w_{p,0}w_{-p,0}$ commutes with both $v_{0,p}$ and $w_{p,0}$, so the degree two part of the centre has dimension at most two.
It is therefore spanned by
$z_{\gamma\lambda}$ and $z_{\mu\lambda}$.  Any element of $\Lambda_4$ is
in the span of $z_\lambda$, $z_\mu$ and $z_\gamma$, so they certainly
form a basis of the degree four part of the centre.  Lastly a central
element of $\Lambda_0$ must be a scalar multiple of the identity, so we
are done.
\end{proof}

We already know some non-inner $K$-grading-preserving derivations of $\Lambda$, namely those
induced by the action of $\ssl_3(k)$ and the grading derivation $\Delta$
defined by $\Delta(l) = nl$ for $l \in \Lambda_n$.

\begin{prop} The internal degree zero part of $\HH^1(\Lambda)$ is
  isomorphic to $\mathfrak{gl}_3(k)$ and is spanned by the
  image of $\Delta$ together with the images of the derivations arising
  from the $\ssl_3(k)$-action.
\end{prop}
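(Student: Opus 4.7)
I would compute $\der(\Lambda)_0$ directly using the quadratic presentation $\Lambda = T_\mathbbm{k}(V)/(R)$ of Theorem~\ref{presentation}, then quotient by inner derivations. Since $\mathbbm{k}$ is separable, every derivation of $\Lambda$ agrees, modulo an inner derivation, with one annihilating $\mathbbm{k}$; for internal degree zero the adjustment lies in $\Lambda_0=\mathbbm{k}$. A derivation vanishing on $\mathbbm{k}$ is determined by a $\mathbbm{k}$-bimodule map $d:V\to V$ satisfying $(d\otimes 1+1\otimes d)(R)\subseteq R$; let $\mathcal{D}$ denote the space of all such $d$. The inner derivations $\ad(z)$ for $z=\sum c_\alpha e_\alpha\in\mathbbm{k}$ lie in $\mathcal{D}$ and act on $v\in e_\alpha Ve_\beta$ as multiplication by $c_\alpha-c_\beta$; since $\ad(1)=0$ they form a two-dimensional subspace of $\mathcal{D}$, entirely $\ssl_3(k)$-invariant.

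The key step is to decompose $\mathcal{D}$ as an $\ssl_3(k)$-module. The $\ssl_3(k)$-action by conjugation preserves $\mathcal{D}$, and
\[ \End_\mathbbm{k}(V)=\bigoplus_{(\alpha,\beta)}\End_k(e_\alpha Ve_\beta)\cong L(1,1)^{\oplus 4}\oplus L(0,0)^{\oplus 4}, \]
since each $e_\alpha Ve_\beta$ is a copy of $L(1,0)$ or $L(0,1)$ and $\End_k$ of either is $L(1,1)\oplus L(0,0)$. So $\mathcal{D}\cong L(1,1)^{\oplus a}\oplus L(0,0)^{\oplus b}$ for some $0\le a,b\le 4$, and the target $\HH^1(\Lambda)_0\cong\mathfrak{gl}_3(k)$ of dimension $9$, combined with the two-dimensional inner subspace, makes $a=1$, $b=3$ the predicted answer.

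For the invariant part $b$: an $\ssl_3(k)$-invariant element of $\End_\mathbbm{k}(V)$ is a quadruple $(c_{\lambda\gamma},c_{\mu\lambda},c_{\gamma\lambda},c_{\lambda\mu})$ of scalars acting on each summand of $V$. Direct inspection of each generator of $R$ in Theorem~\ref{presentation} shows that only the $e_\lambda Re_\lambda$ generator $v_{1,0}\otimes v_{0,1}+w_{0,1}\otimes w_{1,0}$ imposes a constraint, namely $c_{\lambda\gamma}+c_{\gamma\lambda}=c_{\lambda\mu}+c_{\mu\lambda}$; so $b=3$, with basis $\Delta, \ad(e_\gamma), \ad(e_\mu)$. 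For $a$: an $L(1,1)$-element is a tuple $(X_{\lambda\gamma},X_{\mu\lambda},X_{\gamma\lambda},X_{\lambda\mu})\in\ssl_3(k)^4$, and I would show these four elements must coincide. The relations $R_{\gamma\gamma}$ and $R_{\mu\mu}$ give the equalities $X_{\gamma\lambda}=X_{\lambda\gamma}$ and $X_{\mu\lambda}=X_{\lambda\mu}$: under the identification $L(0,1)\otimes L(1,0)\cong\End_k(L(1,0))$, preservation of the $L(1,1)$ summand reduces to $\operatorname{tr}(WT)=0$ for all traceless $T$, forcing $W=0$ by non-degeneracy of the trace form in characteristic $p>3$. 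The relations $R_{\mu\gamma}$ and $R_{\gamma\mu}$ similarly force $X_{\lambda\gamma}=X_{\mu\lambda}$ and $X_{\gamma\lambda}=X_{\lambda\mu}$ via the analogous calculation on the tensor squares $L(1,0)^{\otimes 2}$ and $L(0,1)^{\otimes 2}$. Once all four components agree, $R_{\lambda\lambda}$ is preserved automatically, so $a=1$ and the corresponding summand is spanned by the $\ssl_3(k)$-derivations of Section~\ref{sec:action}.

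Combining, $\mathcal{D}\cong L(1,1)\oplus L(0,0)^{\oplus 3}$, and modding out by the two-dimensional inner subspace yields $\HH^1(\Lambda)_0\cong L(1,1)\oplus L(0,0)\cong\mathfrak{gl}_3(k)$, spanned by the derivations induced from $\ssl_3(k)$ together with $\Delta$. The main obstacle is the $L(1,1)$ verification: one must check each of the four off-diagonal constraints separately, and while all ultimately rest on non-degeneracy of $\ssl_3(k)$-invariant bilinear forms, the arguments for the ``symmetric'' relations $R_{\mu\gamma},R_{\gamma\mu}$ differ in detail from those for the ``adjoint'' relations $R_{\gamma\gamma},R_{\mu\mu}$.
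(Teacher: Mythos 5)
Your proposal is correct, and its core ingredients (quadraticity, $\ssl_3(k)$-equivariance of $R$, the symmetric-square and trace/evaluation arguments, and the fact that in degree zero a derivation must annihilate $\mathbbm{k}$ and is therefore determined by four maps on the summands of $V$) are the same as the paper's. The difference is organizational: the paper takes a single degree-zero derivation $\delta$, writes each component $\delta_{\alpha\beta}\in\End_k(e_\alpha V e_\beta)$ as a scalar plus the action of an $\ssl_3(k)$-element, and successively extracts constraints from $e_\mu R e_\gamma$, $e_\gamma R e_\mu$, $e_\mu R e_\mu$ and finally $e_\lambda R e_\lambda$ until $\delta$ is shown to be a combination of $\Delta$, the $\ssl_3(k)$-action and an inner derivation. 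You instead observe at the outset that the space $\mathcal D$ of all such derivations is an $\ssl_3(k)$-submodule of $\End_{\mathbbm k}(V)\cong L(1,1)^{\oplus4}\oplus L(0,0)^{\oplus4}$, hence of the form $L(1,1)^{\oplus a}\oplus L(0,0)^{\oplus b}$, and compute $a$ and $b$ separately from the same trace/symmetric-square constraints. This buys a cleaner structural statement (you get for free that $\HH^1(\Lambda)_0\cong L(1,1)\oplus L(0,0)$ as an $\ssl_3(k)$-module, and the linear-independence check at the end of the paper's proof becomes automatic since $L(1,1)\not\cong L(0,0)$), at the cost of needing the decomposition $\End_k(L(1,0))\cong L(1,1)\oplus L(0,0)$, which does hold for $p>3$ since $\ssl_3$ is simple and the trace form is nondegenerate. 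One small point: you invoke separability of $\mathbbm{k}$ to reduce to derivations annihilating $\mathbbm{k}$, but in internal degree zero no inner adjustment is actually needed, because a degree-zero derivation preserves $\Lambda_0$ and hence (as the paper notes, using $p\neq2$ and idempotence) already vanishes on it. Also, your four equalities among the $X_{\alpha\beta}$ are more than needed — any two independent ones suffice once you use $R_{\lambda\lambda}$ — but the redundancy is harmless.
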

\begin{proof}
  Let $\delta$ be a non-inner derivation of $\Lambda$ which preserves
  the $K$-grading.
  Since $\delta$ preserves $\Lambda_0=\langle
  e_\lambda,e_\gamma,e_\mu\rangle$ and the $e_\alpha$ are idempotent, it
  must act as zero on $\Lambda_0$. Therefore $\delta$
  is determined by its
  action on $\Lambda_1$, and
  must send $e_\alpha\Lambda_1 e_\beta$ to
  $e_\alpha\Lambda e_\beta$ for each $\alpha,\beta$.  

  So $\delta$ is
  determined by the four linear maps $\delta_{\alpha\beta} \in
\End_k(e_\alpha
  \Lambda_1 e_\beta)$, for $(\alpha, \beta) = (\gamma, \lambda), (\lambda,
  \gamma), (\mu,\lambda), (\lambda, \mu)$.  Any such maps extend
  uniquely to a derivation of $T_\mathbbm{k}(V)$.  The condition for
  $\delta$ to be a derivation is that these maps, when extended to a
  derivation on $T_{\mathbbm{k}}(V)$, must preserve $R \subseteq V
  \otimes _{\mathbbm{k}} V$.  Identify $e_\gamma V e_\lambda$ and
  $e_\lambda V e_\mu$ with $L(0,p)$, and $e_\lambda V e_\gamma$ and
  $e_\mu V e_\lambda$ with $L(0,p)^*$.  Then $e_\mu R e_\gamma$ is the
  symmetric square of $L(0,p)$, and if $a$ and $b$ are linear
  endomorphisms of $L(0,p)$ then $1\otimes a + b \otimes 1$ preserves
  the symmetric square of $L(0,p)$ if and only if $a$ and $b$ differ by
  a scalar multiple of the identity.
  Thus $\delta_{\mu \lambda}- \delta_{\lambda \gamma}$ and $\delta_{\gamma \lambda} -
  \delta_{\lambda \mu}$ are scalar multiples of the identity.

  Write $\delta_{\mu \lambda} = rI + x$ and $\delta_{\lambda \mu}=
  sI + y$ where $I$ is the identity, $r,s\in k$ and $x$ and $y$ are the
  linear endomorphisms of $L(0,p)$ and $L(0,p)^*$ induced by the action
  of elements
  $X,Y \in \ssl_3(k)$.  Note that under our
  identifications, $e_\mu R e_\mu$ is
  the kernel of the evaluation map $\nu: L(0,p) \otimes L(0,p)^* \to k$.
  We claim that the endomorphism \[(\delta_{\mu \lambda}-rI)\otimes 1 + 1 \otimes
  (\delta_{\lambda \mu}-sI)=x\otimes 1 + 1 \otimes y\] has image contained in $\ker \nu$.
  Note $L(0,p)\otimes L(0,p)^* = \ker \nu \oplus L(0,0)$ where
  the trivial summand is spanned by $v=\sum v_{ij}\otimes v_{ij}^*$, and
  that $\ker \nu$ belongs to $R$ so is preserved by this endomorphism
  since it differs from the action of $\delta$ by a scalar multiple of
  the identity.  Both $x\otimes 1$ and $1\otimes y$ act on $v$ by scalar
  multiplication by their trace, which is zero, proving the claim.
  
  This shows that for 
  any $u \in L(0,p)$ and $f \in L(0,p)^*$,
  \[ f(x)(u) +
  (y(f))(u) = 0\]
  that is, $(Y\cdot f) (u) = f(-X\cdot u)$ for all $f,u$.  But by
  definition of the action on the dual space, $(Y\cdot f)(u) = f(-Y\cdot
  u)$.  It follows
  $X=Y$.  We now have
  \begin{align*}
  \delta_{\mu \lambda} &= rI + \rho(X) &    \delta_{\lambda \gamma} &=r'I +
  \rho(X) \\ \delta_{\lambda \mu} &=sI + \rho' (X)  &  \delta_{\gamma
  \lambda} & = s'I + \rho' (X) \end{align*} for some $X \in \ssl_3(k)$ and
$r,r',s,s' \in k$
  where $\rho$ and $\rho'$ are the representations corresponding to
  $L(0,p)$ and its dual.
  The only part of $R$ we
  are yet to consider is $e_\lambda R e_\lambda$, but
  $\delta$ preserves this if and only if $r+s=r'+s'$.  Adding
  \[((r+s)/2+r')\ad(e_\gamma) + ( ( s-r)/2)\ad(e_\mu)\] shows
  $\delta$ is $((r+s)/2)\Delta$ plus the action of an element of
  $\ssl_3(k)$.
  
  The inner derivations of degree zero are spanned by the
  $\ad(e_\alpha)$s which are linearly independent of
  $\Delta$ and the $\ssl_3(k)$-action. Any linear dependence could only
  involve the $H_i$ and $\Delta$ action because the $\ad(e_\alpha)$
  preserve the weight grading.  $\Delta$ and the $\ad(e_\alpha)$ act as
  a scalar on $e_\lambda V e_\gamma$, but no nonzero linear combination
  of the $H_i$ does, so the $H_i$ cannot be involved.  Finally $\Delta$
  is not a linear combination of $\ad(e_\gamma)$ and $\ad(e_\mu)$ since
  it acts by the same scalar on $e_\gamma V e_\lambda$ as it does on
  $e_\lambda V e_\mu$.
\end{proof}

\section{Verma modules in this block are Koszul} \label{kos}
Recall that a graded module $M$ over a Koszul algebra
$\Gamma$ is called Koszul if
it has a graded projective resolution
$P^* \twoheadrightarrow M$ such that $P^i = \Gamma P^i_i$. Such a projective resolution
is called linear. Equivalently $M$ is Koszul if $\Ext^{i,j}_\Gamma (M,
\Gamma/\Gamma_{>0})$ is zero unless $i=j$, where $i$ is the
homological degree and $j$ the internal degree arising from the
$\ZZ_{\geq 0}$-grading on $\Gamma$.
Koszulity implies $M
\cong (\Gamma \otimes_\mathbbm{k} M_0) / H$ where $H \subseteq \Gamma_1\otimes M_0$.
In this section we show that the Verma modules $Z(\lambda)$, $Z(\gamma)$
and $Z(\mu)$ are Koszul. 

$\Lambda$ has an $X$-grading which we call the weight grading obtained by putting the $e_\alpha$ in
degree $(0,0)$ and the $v_{i,j}$ and $w_{i,j}$ in degree $(i,j)$.

\begin{lem}
  Under the Morita equivalence between our block of $u$ and $\Lambda$,
  the Verma modules $Z(\lambda), Z(\gamma), Z(\mu)$ correspond to
  \[Z_\lambda=\frac{\Lambda e_\lambda }{
\Lambda v_{0,p}+\Lambda v_{p,-p}+\Lambda w_{-p,p} + \Lambda
w_{p,0}},Z_\gamma=\frac{\Lambda e_\gamma }{ \Lambda v_{p,0}},
Z_\mu=\frac{\Lambda e_\mu}{ \Lambda w_{0,p}} \]
respectively.
\end{lem}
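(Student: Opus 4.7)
The plan is to use the Morita correspondence: the functor $N \mapsto \eps N$ sends the Verma module $M(\alpha)$ to $\eps M(\alpha)$, viewed as a $\Lambda$-module via $\Lambda \cong E \cong \eps u \eps$. Since $v_\alpha$ generates $M(\alpha)$ as a $u$-module and lies in $\eps_\alpha M(\alpha)$, the module $\eps M(\alpha)$ is cyclic over $\Lambda$ generated by $\eps_\alpha v_\alpha$, giving a presentation $\eps M(\alpha)\cong \Lambda e_\alpha/K_\alpha$ for some left submodule $K_\alpha$ of $\Lambda e_\alpha$. My task is therefore to identify $K_\alpha$ with the explicit left ideal stated in each case.

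To do this I would examine the $K$-degree one part of $K_\alpha$. For $\alpha=\gamma$, Corollary \ref{first_second} combined with Figure \ref{Pgamma} tells us that $e_\lambda\Lambda_1 e_\gamma$ is three-dimensional, with weight-graded $L(\lambda)$-summands at shifts $[p,0]$, $[-p,p]$, $[0,-p]$, whereas by Section \ref{verma_section} the Verma $M(\gamma)_1$ contains only the latter two. Hence the surjection $e_\lambda V e_\gamma\to \eps_\lambda M(\gamma)_1$ has one-dimensional kernel, corresponding to the missing shift $[p,0]$. Under the identification $e_\lambda V e_\gamma\cong L(1,0)$ from Section \ref{sec:action}, the shifts $[p,0], [-p,p], [0,-p]$ match the $\ssl_3$-weights $(1,0), (-1,1), (0,-1)$ of $L(1,0)$; so the kernel is $kv_{1,0}$, and $\Lambda v_{1,0}\subseteq K_\gamma$. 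The case $\alpha=\mu$ is entirely symmetric and yields $\Lambda w_{0,1}\subseteq K_\mu$.

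For $\alpha=\lambda$ the argument runs in parallel, with two contributions. From Section \ref{verma_section}, $M(\lambda)_1$ consists only of $L(\gamma)[-p,0]\oplus L(\mu)[0,-p]$, one copy each, whereas $P(\lambda)_1$ has three copies of each. The kernel of $e_\gamma V e_\lambda\cong L(0,1)\to \eps_\gamma M(\lambda)_1$ is the two-dimensional complement of the $(-1,0)$ weight line, namely $kv_{0,1}+kv_{1,-1}$; similarly the kernel of $e_\mu V e_\lambda\cong L(1,0)\to \eps_\mu M(\lambda)_1$ is $kw_{-1,1}+kw_{1,0}$. This gives $\Lambda v_{0,1}+\Lambda v_{1,-1}+\Lambda w_{-1,1}+\Lambda w_{1,0}\subseteq K_\lambda$.

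For the reverse inclusions I would compare Hilbert series. In each case $\eps M(\alpha)$ has $K$-graded dimensions $(1,2,1)$ in degrees $(0,1,2)$, inherited from the composition series of $M(\alpha)$, so $\dim \eps M(\alpha)=4$. The Hilbert series of the proposed quotients $\Lambda e_\alpha/(\text{ideal})$ can be computed directly using the quadratic relations of $\Lambda$ listed at the start of Section \ref{hoch_section}. The main obstacle is this computation: although routine, it requires systematically reducing spanning monomials of $\Lambda e_\alpha$ modulo the ideal and verifying that the surviving dimension is exactly $4$. Once the dimensions match, the surjection $\Lambda e_\alpha/(\text{ideal})\twoheadrightarrow \eps M(\alpha)$ is forced to be an isomorphism.
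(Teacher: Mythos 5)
Your proposal follows essentially the same two-step strategy as the paper's own proof: first use the weight-graded composition factors of $M(\alpha)$ (compared with those of $P(\alpha)_1$) to force the stated generators into $K_\alpha$, then conclude equality by observing that the candidate quotient $\Lambda e_\alpha/(\text{ideal})$ already has only four (one-dimensional) composition factors. The paper simply asserts this four-factor count and exhibits explicit bases for the $M_\alpha$ immediately after the lemma, while you flag the verification as the remaining routine reduction against the relations of $\Lambda$; it is the same computation either way.
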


\begin{proof}
Let $S_\alpha$ be the weight-graded simple $\Lambda$-module $\Lambda
e_\alpha/J(\Lambda)e_\alpha$.  The Morita correspondent of $Z(\gamma)$
is a quotient $\Lambda e_\gamma/I_\gamma$ with weight-graded composition
factors $S_\gamma, S_\lambda[-p,p], S_\lambda[0,-p], S_\mu[-p,0]$.
So $I_\gamma$ must contain $v_{p,0}$, but $\Lambda e_\gamma / \Lambda
v_{p,0}$ already has only four composition factors, so it must be the
Morita correspondent of $Z(\gamma)$.  The other correspondences follow similarly.
\end{proof}

The following table records a list of elements whose images form a basis for the modules $Z_\alpha$:

\begin{center}
    \begin{tabular}{ll}
        $Z_\gamma$: & $e_\gamma, v_{-p,p},
v_{0,-p}, w_{-p,p}v_{0,-p} = - w_{0,-p}v_{-p,p}$\\
        $Z_\mu$: & $ e_\mu, w_{-p,0}, w_{p,-p}, 
v_{p,-p}w_{-p,0} = - v_{-p,0}w_{p,-p}$\\
        $Z_\lambda$: &$e_\lambda, w_{0,-p}, v_{-p,0},
v_{0,-p}v_{-p,0}=-w_{-p,0}w_{0,-p}$.
    \end{tabular}
\end{center}

Put $I = \Lambda v_{0,p}+\Lambda w_{p,0} $ and $M = \Lambda e_\lambda /
I$, regarded as a $K$-graded module with $e_\lambda +I$ in degree zero.

\begin{lem}
  There is an exact sequence of $K$-graded modules
  \begin{equation}\label{M_seq} 0 \to Z_\gamma[1] \oplus Z_\mu [1]
    \stackrel i \to M \stackrel \pi \to Z_\lambda \to 0\end{equation}
  where $i(e_\gamma + \Lambda v_{p,0})=v_{p,-p}+I$, $i(e_\mu+\Lambda
  w_{0,p})= w_{-p,p} + I$  and $\pi$ is the quotient
  map. \end{lem}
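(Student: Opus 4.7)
The plan is to verify exactness in three stages: well-definedness of $i$, the identification $\ker\pi = \im i$, and injectivity of $i$.

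For well-definedness, on the $M_\gamma[1]$ summand I need $v_{1,0} v_{1,-1} \in I$ so that the homomorphism $\Lambda e_\gamma \to M$ given by $\mu e_\gamma \mapsto \mu v_{1,-1} + I$ descends to the quotient $M_\gamma$. Applying the $e_\lambda R e_\lambda$ relation $v_{-\alpha} v_\beta + w_\beta w_{-\alpha} = 0$ (for $\alpha \neq \beta$) with $\alpha = (-1,0)$, $\beta = (1,-1)$ gives $v_{1,0} v_{1,-1} = -w_{1,-1} w_{1,0} \in \Lambda w_{1,0} \subseteq I$. Symmetrically, with $\alpha = (1,-1)$, $\beta = (0,1)$, I obtain $w_{0,1} w_{-1,1} = -v_{-1,1} v_{0,1} \in \Lambda v_{0,1} \subseteq I$, handling the $M_\mu[1]$ summand. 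The $[1]$ shift accounts for $v_{1,-1}, w_{-1,1} \in \Lambda_1$. For the kernel identification, from the presentations $\ker\pi = (\Lambda v_{1,-1} + \Lambda w_{-1,1} + I)/I$, which is precisely the submodule of $M$ generated by the images of $i$; so $\pi \circ i = 0$ and $\im i = \ker\pi$.

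It remains to show $i$ is injective. Since $\im i = \ker\pi$ and $\pi$ is surjective, injectivity is equivalent to $\dim M = 4 + 4 + 4 = 12$. I plan to argue structurally rather than by a raw dimension count: each of $i|_{M_\gamma[1]}$ and $i|_{M_\mu[1]}$ is injective iff it does not kill the simple socle of its domain. For $M_\gamma$, the socle is the class of $w_{-1,1} v_{0,-1}$, whose image in $M$ simplifies, using $v_{0,-1} v_{1,-1} = -w_{1,-1} w_{0,-1}$ (another instance of the same $e_\lambda R e_\lambda$ relation) and $w_{-1,1} w_{1,-1} = z$, where $z$ spans the one-dimensional $e_\mu \Lambda_2 e_\mu$, to $-z w_{0,-1} + I$. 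One then verifies $z w_{0,-1} \notin I$ by computing in the three-dimensional $e_\mu \Lambda_3 e_\lambda$: only the image of right multiplication by $v_{0,1}$ on $e_\mu \Lambda_2 e_\gamma$ and right multiplication by $w_{1,0}$ on $e_\mu \Lambda_2 e_\mu$ contribute to $I \cap e_\mu \Lambda_3 e_\lambda$, and the explicit relations show $z w_{0,-1}$ is not in their sum. An analogous argument handles $M_\mu[1]$. Finally, the generators of the two submodule images sit in distinct weight-graded components of $M$ (weights $(p,-p)$ and $(-p,p)$ respectively), and a weight-grading argument on the full cyclic submodules they generate shows the sum is direct.

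The main obstacle is this non-triviality check for the socle image $z w_{0,-1} \notin I$: the algebraic simplifications are routine applications of the $e_\lambda R e_\lambda$ and $e_\mu R e_\mu$ relations, but confirming that $I$ does not exhaust $e_\mu \Lambda_3 e_\lambda$ at the relevant weight requires a careful enumeration of the degree-three relations in $\Lambda$.
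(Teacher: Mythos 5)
Your proposal follows the paper's proof step for step: well-definedness via the $e_\lambda R e_\lambda$ relations, the identification $\im i = \ker\pi$, and the reduction of injectivity to showing $i$ is nonzero on the socles $\soc M_\gamma$ and $\soc M_\mu$ (with the sum being direct because these socles are non-isomorphic simples, or equivalently sit in distinct weight degrees). The "careful enumeration of the degree-three relations" that you flag as the main remaining obstacle is precisely what the paper carries out—it checks the symmetric case $i(\soc M_\mu)$, represented by $v_{1,-1}w_{-1,0}w_{-1,1}$ of weight $[-p,0]$, and shows by listing the possible degree-three products landing in that weight that $\Lambda v_{0,1}$ and $\Lambda w_{1,0}$ contribute nothing there; your rewriting of $i(\soc M_\gamma)$ as $-zw_{0,-1}$ is a minor cosmetic variation on the same calculation.
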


\begin{proof}
  $i$ is well-defined because
  $w_{0,p}w_{-p,p}=-v_{-p,p}v_{0,p} \in I$ and $v_{p,0} v_{p,-p} =
  -w_{p,-p}w_{p,0} \in I$, and $\im i$ is the kernel of the quotient
  map $M \to Z_\lambda$.  We only need to show $i$ is injective,
  and it is enough to prove $i$ is injective on the socle $S_\mu
  \oplus S_\gamma$ of $Z_\gamma \oplus Z_\mu$.  Since the two simple
  summands are non-isomorphic (or by considering the weight degree), it is
  enough simply to show that $i$ is nonzero on $\soc Z_\gamma$ and on
  $\soc Z_\mu$.

  $i(\soc Z_\mu)$ is spanned by the image of $v_{p,-p}w_{-p,0}w_{-p,p}$
  in $M$, which is homogeneous of $K$-degree 3 and weight degree $[-p,0]$.
  We want to show this element does not lie in $I=\Lambda
  v_{0,p}+\Lambda w_{p,0}$.  We claim $\Lambda v_{0,p}$ has no nonzero
  element in this $K$- and
  weight-degree.  To get such an element, we would have to multiply $v_{0,p}$ by two of the
  generators $v_{i,j},w_{i,j}$, and the first has to be $v_{0,-p}$ as
  all other products are either zero or too far from $[-p,0]$ (it helps
  to look at Figure~\ref{Plambda} to see this).  The second must then be
  $v_{-p,0}$ if we are to end up at weight degree $[-p,0]$, but
  $v_{-p,0}v_{0,-p}v_{0,p} =0$ because $v_{-p,0}v_{0,-p}=0$.  Similarly,
  $\Lambda w_{p,0}$ has no nonzero $K$-degree 3 weight degree $[-p,0]$
  element: the only possible product is $v_{-p,0} w_{-p,0} w_{p,0}$ as
  before, but $v_{-p,0}w_{-p,0}=0$.

  A similar argument shows that $i$ is injective on $\soc Z_\gamma$,
  completing the proof.
\end{proof}

Let $\Omega (M) = (\Lambda e_\mu \oplus \Lambda
e_\gamma)/\Lambda(w_{0,p}+v_{p,0})$.

\begin{lem}
  There are exact sequences of $K$-graded modules
  \begin{gather} \label{OM_seq}0 \to Z_\lambda [1]\stackrel \psi \to \Omega(M)
    \stackrel \phi \to Z_\gamma \oplus
    Z_\mu \to 0 \\
  \label{M_res} 0 \to M[1] \stackrel i \to \Lambda e_\mu[1] \oplus \Lambda
  e_\gamma[1]
    \stackrel j \to \Lambda
    e_\lambda \to M \to 0
  \end{gather}
\end{lem}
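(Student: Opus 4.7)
Both sequences are organized around the single element $\xi := (w_{0,1}, v_{1,0}) \in \Lambda e_\mu \oplus \Lambda e_\gamma$. The quadratic relations of $\Lambda$ yield two key computations that I will use throughout: first, that $v_{0,1}\xi = 0 = w_{1,0}\xi$, each of which reduces to four individual products that vanish by a single quadratic relation ($v_\alpha w_\beta + v_\beta w_\alpha=0$, $v_\alpha v_{-\beta}=0$ or $w_{-\alpha} w_\beta=0$ for $\alpha \neq \beta$, or by a factor of $2$ with $p>2$); and second, that $j(\xi) = w_{0,1} w_{1,0} + v_{1,0} v_{0,1} = 0$ by the $e_\lambda R e_\lambda$ relation $v_{-\alpha} v_\beta + w_\beta w_{-\alpha} = 0$ with $\alpha = (-1,0)$, $\beta = (0,1)$.

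For (\ref{M_res}), the map $j$ sending the generators of $\Lambda e_\mu[1] \oplus \Lambda e_\gamma[1]$ to $w_{1,0}$ and $v_{0,1}$ clearly has image $I$, which handles the right half of the sequence. By the first key computation, the rule $\bar e_\lambda \mapsto \xi$ extends to a well-defined $\Lambda$-module homomorphism $i: M[1] \to \Lambda e_\mu[1] \oplus \Lambda e_\gamma[1]$ with image $\Lambda \xi$; by the second, $\Lambda \xi \subseteq \ker j$. Since $\dim M = 12$ by (\ref{M_seq}), we have $\dim \ker j = 24 - \dim I = 12$, while $\Lambda \xi$ is a cyclic quotient of $M$ (annihilated by $v_{0,1}$ and $w_{1,0}$), so $\dim \Lambda \xi \leq 12$. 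Equality would give $\im i = \ker j$ and force $i$ to be injective.

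For (\ref{OM_seq}), define $\phi$ via the natural coordinate-wise quotient maps (well-defined on $\Omega(M)$ because both components of $\xi$ land in the respective submodules), and $\psi(\bar e_\lambda) = [(w_{0,1},0)]$. To see $\psi$ is well-defined I will verify that each of the four generators $v_{0,1}, v_{1,-1}, w_{-1,1}, w_{1,0}$ of $\mathrm{Ann}_{M_\lambda}(\bar e_\lambda)$ annihilates this class in $\Omega(M)$: three cases are immediate from $v_{0,1} w_{0,1} = w_{-1,1} w_{0,1} = w_{1,0} w_{0,1} = 0$ in $\Lambda$, while the $v_{1,-1}$ case uses the subtler identity $(v_{1,-1} w_{0,1}, 0) = v_{1,-1}\xi \in \Lambda \xi$, which holds because $v_{1,-1} v_{1,0}=0$. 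A short reduction modulo $\Lambda\xi$ identifies any $(aw_{0,1}, bv_{1,0})$ in $\ker\phi$ with $(a-b)(w_{0,1},0)$, so $\psi$ surjects onto $\ker\phi$.

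The bookkeeping then closes both sequences at once: the surjection $M_\lambda[1] \to \ker\phi$ gives $\dim \ker\phi \leq 4$, and with surjectivity of $\phi$ onto $M_\mu \oplus M_\gamma$ this yields $\dim \Omega(M) \leq 12$, i.e., $\dim \Lambda\xi \geq 12$; combined with the upper bound from the second paragraph, $\dim \Lambda\xi = 12$, simultaneously making $\psi$ an isomorphism onto $\ker\phi$ (proving (\ref{OM_seq})) and $i$ an isomorphism onto $\ker j$ (proving (\ref{M_res})). The main technical obstacle is the well-definedness check for $\psi$, particularly the $v_{1,-1}$ generator: one has to spot that the nonzero product $v_{1,-1} w_{0,1}$ is absorbed into $\Lambda \xi$ because the paired product $v_{1,-1} v_{1,0}$ vanishes; every subsequent step is essentially forced by the dimension counting.
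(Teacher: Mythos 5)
Your proof is correct and follows essentially the same approach as the paper: the same maps $i$, $j$, $\phi$, $\psi$ and the same dimension count ($\dim M_\lambda = 4$, $\dim M = 12$, $\dim\Lambda e_\lambda = 24$) close both sequences simultaneously. The one organizational difference is in how the lower bound $\dim\Omega(M)\geq 12$ is obtained: the paper observes that $j$ factors through $\Omega(M)[1]$ and that $\dim\im j=12$, while you verify $v_{0,1}\xi=w_{1,0}\xi=0$ directly, deduce $M\twoheadrightarrow \Lambda\xi$ and hence $\dim\Lambda\xi\leq 12$ — these are the same inequality in disguise, since $\dim\Omega(M)+\dim\Lambda\xi = 24$ and $\Lambda\xi\subseteq\ker j$ is exactly the statement that $j$ kills $\xi$. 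Where you go beyond the paper is in actually checking well-definedness of $\psi$: the paper asserts without proof that $e_\lambda\mapsto$ (class of one component of $\xi$) extends to a map out of $M_\lambda[1]$, but as you observe, one of the four annihilator generators does not kill the chosen representative in $\Lambda e_\mu\oplus\Lambda e_\gamma$ and the resulting nonzero product must be absorbed into $\Lambda\xi$ using the vanishing of its partner coordinate. That is a genuine subtlety worth spelling out, and your treatment of it is correct.
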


\begin{proof}
	$\phi( (a,b)+\Lambda(w_{0,p}+v_{p,0})) = (a + \Lambda w_{0,p},
  b+\Lambda v_{p,0}) $ is well-defined and surjective, with kernel
  \[ \ker \phi = \frac{\Lambda v_{p,0} + \Lambda
  w_{0,p}}{\Lambda(v_{p,0}+w_{0,p})} \]
  generated by the image of $v_{p,0}$.  Therefore $Z_\lambda$ surjects
  onto $\ker \phi$ via the map $\psi$ induced by $e_\lambda \mapsto v_{p,0} +
  \Lambda(v_{p,0}+w_{0,p})$.  We only need $\psi$ to be injective.  If we
  show that $\dim \Omega(M) \geq 12$ then $\ker \phi$ has dimension at
  least four, but since it is a quotient of $Z_\lambda$ it will be
  exactly four and hence $\psi$ will have been shown to be injective.

Letting $\Lambda e_\lambda \to M$ be the quotient map and defining $j$
by $j(e_\mu) =w_{p,0}$, $j(e_\gamma) =v_{0,p}$, we get an exact sequence
\[ \Lambda e_\gamma[1] \oplus \Lambda e_\mu[1] \stackrel j \to \Lambda
e_\lambda \to M \to 0.\]
$v_{p,0}+w_{0,p}\in \ker j$ so $j$
factors through $\Omega(M)[1]$, showing that $\dim \Omega(M) \geq \dim \im
j = \dim \Lambda e_\lambda - \dim M = 12$.  This completes the proof of
the exactness of (\ref{OM_seq}), so $\dim \Omega(M)=12$.

$\dim \im j = 12$, so $\dim \ker j = 12$, and so
$\Lambda(v_{p,0}+w_{0,p})$, which has dimension 12 as $\Omega(M)$ does,
must be all of  $\ker j$.  Therefore $M[1]$ surjects onto $\ker j$ by
$i(e_\lambda+ I) =  v_{p,0}+w_{0,p}$ (this is well-defined).  Since
$\dim M =12$, $i$ must be injective.  This gives exactness of
(\ref{M_res}).
\end{proof}

\begin{cor} $M$ and $\Omega(M)$ are Koszul. \end{cor}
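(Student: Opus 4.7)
The plan is to iterate (\ref{M_res}) to produce a linear projective resolution of $M$, and then to deduce Koszulity of $\Omega(M)$ from the identification $\Omega(M)[1] \cong \Omega^1(M)$ already made in the construction.

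From (\ref{M_res}) one reads off the beginning of the minimal projective resolution of $M$: $P^0 = \Lambda e_\lambda$ covers $M$ (which has simple top $L_\lambda$ in $K$-degree zero), and $P^1 = \Lambda e_\mu[1] \oplus \Lambda e_\gamma[1]$ covers $\Omega^1(M) = \Omega(M)[1]$ (whose top is $L_\mu \oplus L_\gamma$ in $K$-degree one). The same sequence identifies the second syzygy $\Omega^2(M) = \ker(P^1 \to P^0)$ with $M[1]$, via the explicit map $i$ sending $e_\lambda + I$ to $v_{1,0} + w_{0,1}$. Applying a further shift to (\ref{M_res}) and splicing at this self-referential isomorphism extends the resolution, and iterating produces a full projective resolution of $M$ in which $P^{2k}$ is a shift of $\Lambda e_\lambda$ and $P^{2k+1}$ is a shift of $\Lambda e_\mu \oplus \Lambda e_\gamma$, with each $P^n$ generated in $K$-degree $n$. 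Hence $M$ is Koszul.

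Koszulity of $\Omega(M)$ then follows formally. Since $\Omega^1(M) \cong \Omega(M)[1]$, truncating the linear resolution of $M$ just constructed after $P^0$ and shifting by $[-1]$ yields
\[ \cdots \to P^2[-1] \to P^1[-1] \to \Omega(M) \to 0, \]
a linear resolution of $\Omega(M)$ in which the term $P^{n+1}[-1]$ at homological position $n$ is generated in $K$-degree $n$. So $\Omega(M)$ is Koszul.

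The main obstacle is the $K$-degree bookkeeping required to confirm that the iterated resolution of $M$ is genuinely linear; concretely, one must verify that the generator $v_{1,0} + w_{0,1}$ of the image of $i$ inside $P^1$ lies in precisely the $K$-degree making the projective cover of $\Omega^2(M) \cong M[1]$ a copy of $\Lambda e_\lambda$ shifted to be generated in $K$-degree two rather than one. This reduces to checking that $v_{1,0} + w_{0,1}$ sits in the first radical layer of $P^1$ rather than the zeroth, which is immediate from the weight description used in Section \ref{sec:pres}. Once this base case is verified the iteration propagates cleanly, and everything else is formal.
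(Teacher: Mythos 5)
Your proof takes essentially the same route as the paper: splice shifted copies of (\ref{M_res}) to obtain a two-periodic linear projective resolution of $M$, then truncate after $P^0$ and shift to obtain a linear resolution of $\Omega(M)$. The paper states this in two sentences without spelling out the $K$-degree bookkeeping; your version makes explicit the observation (which the paper leaves implicit) that the splicing is linear precisely because the generator $v_{1,0}+w_{0,1}$ of $\ker j$ sits one $K$-degree above the generators of $P^1 = \Lambda e_\mu[1]\oplus\Lambda e_\gamma[1]$, so that $\Omega^2(M)$, being isomorphic to a shift of $M$, has its projective cover $\Lambda e_\lambda$ placed in $K$-degree two.
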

\begin{proof}
  Splicing shifted copies
  of (\ref{M_res}) produces a linear projective
 resolution of $M$. Koszulity of $\Omega(M)$ follows by truncating and
 shifting the
 same resolution.
\end{proof}

\begin{thm}
  $Z_\lambda, Z_\gamma$ and $Z_\mu$ are Koszul.
\end{thm}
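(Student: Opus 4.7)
The plan is to use the Ext-vanishing characterisation of Koszulity recalled at the start of the section: $M_\alpha$ is Koszul iff $\Ext^{i,j}_\Lambda(M_\alpha, S) = 0$ for all $i \neq j$, where $S$ is the direct sum of graded simples, $i$ the homological and $j$ the internal degree. By the preceding corollary this vanishing already holds for $M$ and $\Omega(M)$, and the two short exact sequences (\ref{M_seq}) and (\ref{OM_seq}) express the Verma modules in terms of those two Koszul modules. I would propagate the vanishing across these sequences via long exact sequences in Ext.

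Concretely, applying $\Ext^*_\Lambda(-, S[j])$ to (\ref{M_seq}), using $\Ext^{i,j}(M, S) = 0$ for $i \neq j$, and tracking the shift identity $\Ext^{i,j}(N[1], S) = \Ext^{i, j-1}(N, S)$, the relevant fragment of the long exact sequence produces a surjection
\[ \Ext^{i-1, j-1}(M_\gamma \oplus M_\mu, S) \twoheadrightarrow \Ext^{i, j}(M_\lambda, S) \quad\text{for all } i \neq j. \]
Symmetrically, applying the same functor to (\ref{OM_seq}) and using the vanishing for $\Omega(M)$ gives
\[ \Ext^{i-1, j-1}(M_\lambda, S) \twoheadrightarrow \Ext^{i, j}(M_\gamma \oplus M_\mu, S) \quad\text{for all } i \neq j. \]
These two surjections drive a mutual induction on homological degree $i$. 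The base case is $\Ext^{0,j}(M_\alpha, S) = \Hom^K_\Lambda(M_\alpha, S[j]) = 0$ for $j \neq 0$, which holds because $M_\alpha$ is generated in $K$-degree zero while $S[j]$ is concentrated in degree $j$, so a degree-zero homomorphism must annihilate the generator. For the inductive step, fix $i \geq 1$ and $j \neq i$: each of $\Ext^{i,j}(M_\lambda, S)$ and $\Ext^{i,j}(M_\gamma \oplus M_\mu, S)$ appears as a quotient of an $\Ext^{i-1, j-1}$ group of the complementary module, whose indices still satisfy $i - 1 \neq j - 1$ and hence vanish by hypothesis. This proves Koszulity of $M_\lambda$, $M_\gamma$ and $M_\mu$ simultaneously.

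The main obstacle is purely book-keeping around the grading conventions: one must verify that the long exact sequences really yield surjections in the stated direction and that the internal-degree shift caused by the $[1]$ twist on the kernels of (\ref{M_seq}) and (\ref{OM_seq}) is correctly accounted for on both sides of the Ext groups. Once those are pinned down, the argument is formal and requires no further module-theoretic input beyond Koszulity of $M$ and $\Omega(M)$ provided by the preceding corollary.
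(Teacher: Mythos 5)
Your proposal is correct and takes essentially the same approach as the paper: the same mutual induction using the long exact sequences in $\Ext(-,S)$ obtained from (\ref{M_seq}) and (\ref{OM_seq}), with Koszulity of $M$ and $\Omega(M)$ killing the outer terms and the $[1]$-shift on the kernel terms supplying the needed decrement in internal degree. The paper phrases the inductive step via the connecting homomorphism (Yoneda multiplication, degree one) inside a short exact sequence rather than as an outright surjection, but this is the same bookkeeping.
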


\begin{proof}
  Let $S = S_\mu\oplus S_\lambda\oplus S_\gamma$. We will show that for every $n$, the graded vector spaces
  $\Ext^{n,m}_\Lambda(Z_\lambda, S)$ and
  \[\Ext^{n,m}_\Lambda(Z_\gamma \oplus Z_\mu, S) \cong
    \Ext^{n,m}_\Lambda(Z_\gamma , S)\oplus \Ext^{n,m}_\Lambda(Z_\mu,S)
  \]
  are zero unless $m=n$.  This is certainly true for $n=0$.

   Consider the long
  exact sequence obtained by applying $\Hom_\Lambda(-, S)$ to
  (\ref{M_seq}). Let $\omega$
   be the connecting homomorphism: it corresponds to Yoneda multiplication by
   the short exact sequence (\ref{M_seq}) and is therefore homogeneous
   of $K$-degree one. For each $n$ we get a
  short exact sequence
  \[ 0 \to \frac{\Ext^n_\Lambda(Z_\gamma \oplus Z_\mu, S)}{\ker \omega} \to
    \Ext^{n+1}_\Lambda(Z_\lambda,S) \stackrel{\pi^*}\to \im \pi^* \to
  0\]
in which the first map, induced by $\omega$, increases $K$-degree by one,
and the second preserves $K$-degree.  $M$ is Koszul, so $\im \pi^*
\subseteq \Ext^{n+1}_\Lambda(M, S)$ is zero outside $K$-degree $n+1$. It
follows that if $\Ext^n_\Lambda(Z_\gamma \oplus Z_\mu, S)$ is zero
outside $K$-degree $n$ then $\Ext^{n+1}_\Lambda(Z_\lambda, S)$ is zero
outside $K$-degree $n+1$.

Similar short exact sequences arising from the long exact sequence
obtained by applying $\Hom_\Lambda(-,S)$ to (\ref{OM_seq}) show
that if $\Ext^{n}_\Lambda(Z_\lambda, S)$ is zero outside $K$-degree
$n$ then $\Ext^{n+1}_\Lambda(Z_\gamma \oplus Z_\mu, S)$ is zero outside
$K$-degree $n+1$.
Since the result holds for $n=0$, this completes the proof.
\end{proof}

\section*{Acknowledgements} 
\label{sec:Acknowledgements}
I would like to thank the anonymous referee whose comments greatly
improved the presentation of the paper.

\bibliography{irregular_blocks}
\bibliographystyle{amsalpha}
\end{document}